\documentclass[hidelinks,12pt,a4paper]{article}
\usepackage[utf8x]{inputenc}
\usepackage[top=30mm, bottom=30mm, inner=20mm, outer=20mm, headsep=10mm, footskip=12mm]{geometry}
\usepackage{amssymb}
\usepackage{amsmath,bm}
\usepackage{mathtools}
\usepackage{amsthm}
\usepackage[english]{babel}
\usepackage{lmodern}
\usepackage{microtype}
\usepackage[mathscr]{euscript}
\usepackage{zref-clever}
\usepackage[colorlinks=true]{hyperref}
\usepackage[dvipsnames]{xcolor}
\usepackage{todonotes}
\usepackage{appendix}
\usepackage{bm}\usepackage{cancel}
\usepackage{upgreek}
\usepackage[normalem]{ulem}

\usepackage{pgf, tikz, pgfplots}
\pgfplotsset{compat=1.17}
\usetikzlibrary{calc, arrows, shapes, positioning, patterns, angles, quotes, intersections, through, backgrounds}
\usetikzlibrary{decorations.markings}

\numberwithin{equation}{section}

\theoremstyle{plain}
\newtheorem{definition}{Definition}[section]

\newtheorem{theorem}[definition]{Theorem}
\newtheorem{proposition}[definition]{Proposition}
\newtheorem{lemma}[definition]{Lemma}
\newtheorem{corollary}[definition]{Corollary}

\theoremstyle{definition}
\newtheorem{remark}[definition]{Remark}

\let\oldnewtheorem\newtheorem

\RenewDocumentCommand{\newtheorem}{momo}{
\IfValueTF{#2}{
\AddToHook{env/#1/begin}{
\zcsetup{countertype={#2=#1}}}
\zcRefTypeSetup{#1}{
Name-sg = #3 ,
}
\oldnewtheorem{#1}[#2]{#3}
}{
\AddToHook{env/#1/begin}{
\zcsetup{countertype={#1=#1}}}
\zcRefTypeSetup{#1}{
Name-sg = #3 ,
}
\IfValueTF{#4}{
\oldnewtheorem{#1}{#3}[#4]
}{
\oldnewtheorem{#1}{#3}
}
}
}

\newcommand{\Cref}[1]{\zcref[S]{#1}}

\makeatletter
\let\save@mathaccent\mathaccent
\newcommand*\if@single[3]{%
\setbox0\hbox{${\mathaccent"0362{#1}}^H$}%
\setbox2\hbox{${\mathaccent"0362{\kern0pt#1}}^H$}%
\ifdim\ht0=\ht2 #3\else #2\fi
}
\newcommand*\rel@kern[1]{\kern#1\dimexpr\macc@kerna}
\newcommand*\widebar[1]{\@ifnextchar^{{\wide@bar{#1}{0}}}{\wide@bar{#1}{1}}}
\newcommand*\wide@bar[2]{\if@single{#1}{\wide@bar@{#1}{#2}{1}}{\wide@bar@{#1}{#2}{2}}}
\newcommand*\wide@bar@[3]{%
\begingroup
\def\mathaccent##1##2{%
\let\mathaccent\save@mathaccent
\if#32 \let\macc@nucleus\first@char \fi
\setbox\z@\hbox{$\macc@style{\macc@nucleus}_{}$}%
\setbox\tw@\hbox{$\macc@style{\macc@nucleus}{}_{}$}%
\dimen@\wd\tw@
\advance\dimen@-\wd\z@
\divide\dimen@ 3
\@tempdima\wd\tw@
\advance\@tempdima-\scriptspace
\divide\@tempdima 10
\advance\dimen@-\@tempdima
\ifdim\dimen@>\z@ \dimen@0pt\fi
\rel@kern{0.6}\kern-\dimen@
\if#31
\overline{\rel@kern{-0.6}\kern\dimen@\macc@nucleus\rel@kern{0.4}\kern\dimen@}%
\advance\dimen@0.4\dimexpr\macc@kerna
\let\final@kern#2%
\ifdim\dimen@<\z@ \let\final@kern1\fi
\if\final@kern1 \kern-\dimen@\fi
\else
\overline{\rel@kern{-0.6}\kern\dimen@#1}%
\fi
}%
\macc@depth\@ne
\let\math@bgroup\@empty \let\math@egroup\macc@set@skewchar
\mathsurround\z@ \frozen@everymath{\mathgroup\macc@group\relax}%
\macc@set@skewchar\relax
\let\mathaccentV\macc@nested@a
\if#31
\macc@nested@a\relax111{#1}%
\else
\def\gobble@till@marker##1\endmarker{}%
\futurelet\first@char\gobble@till@marker#1\endmarker
\ifcat\noexpand\first@char A\else
\def\first@char{}%
\fi
\macc@nested@a\relax111{\first@char}%
\fi
\endgroup
}
\makeatother

\newcommand{\R}{\mathbb R}

\newcommand{\eps}{\varepsilon}

\usepackage[xcolor]{changebar}
\cbcolor{blue}

\usepackage[inline]{enumitem}
\newcommand{\enumlabelformat}{\roman}

\newlength{\thelabelsep}
\newcounter{inlineenum}
\renewcommand{\theinlineenum}{\enumlabelformat{inlineenum}}

\newcommand{\blue[1]}{{#1}}
\let\phi\varphi

\DeclareMathOperator{\TC}{TC}
\DeclareMathOperator{\vol}{vol}

\DeclareMathOperator{\arcosh}{arcosh}

\newcommand{\nchi}{{\raise.3ex\hbox{$\chi$}}}

\usepackage[runin]{abstract}

\newcommand{\lm}[1]{\mathbb{L}^2(#1)}

\newcommand{\LpLS}{Lo\-rentz\-ian pre-length space }
\newcommand{\LpLSn}{Lo\-rentz\-ian pre-length space}

\newcommand{\LpLSsn}{Lo\-rentz\-ian pre-length spaces}

\newcommand*{\bp}{\bar{p}}
\newcommand*{\bq}{\bar{q}}
\newcommand*{\br}{\bar{r}}

\newcommand{\tp}{\tilde{p}}
\newcommand{\tq}{\tilde{q}}
\newcommand{\tx}{\tilde{x}}
\newcommand{\tr}{\tilde{r}}

\newcommand{\ty}{\tilde{y}}

\newcommand{\ma}{\ensuremath{\measuredangle}}
\newcommand{\tma}{\ensuremath{\Tilde{\measuredangle}}}
\newcommand{\diff}{\ensuremath{\mathrm{d}}}

\makeatletter
\let\@fnsymbol\@arabic
\makeatother

\allowdisplaybreaks

\title{Total curvature and length estimates for timelike curves in Lorentzian length spaces}
\author{Darius Er\"os\footnotemark[1], \,\,\,Felix Rott\footnotemark[2], \,\,\,Zhe-Feng Xu\footnotemark[2]\,\,\textsuperscript{,}\footnotemark[3]}
\date{\today}

\begin{document}

\maketitle

\footnotetext[1]{Department of Mathematics, University of Vienna, Oskar-Morgenstern-Platz 1, 1090 Vienna, Austria, darius.eroes@univie.ac.at}
\footnotetext[2]{SISSA, 34136, Trieste, Italy, frott@sissa.it, zxu@sissa.it} 
\footnotetext[3]{School of Mathematical Sciences, University of Science and Technology of China,  230026, Hefei, China, xzf1998@mail.ustc.edu.cn}

\begin{abstract}
We introduce and study a synthetic notion of timelike total curvature for curves in Lorentzian length spaces with upper curvature bounds. 
In particular, we prove that our notion agrees with its smooth counterpart, and we show that timelike curves of finite total curvature are rectifiable. 
As the main application, we provide a sharp lower bound for the length of timelike curves solely in terms of the time separation between their endpoints and their total curvature. 

\bigskip

\noindent
\emph{Keywords:} Lorentzian length spaces, timelike upper curvature bounds, total curvature, length estimates
\medskip

\noindent
\emph{MSC2020:}
53C23, 
53C50, 
53B30 
\end{abstract}
\tableofcontents

\section{Introduction}
Lorentzian geometry provides the mathematical framework for the study of causality, spacetime structure, and variational problems in general relativity. 
In the smooth setting, this heavily relies on the differentiability of the metric, and involves second-order methods for the study of geodesics and curvature.
Many natural geometric and physical situations, however, lie beyond the smooth category, which motivated the development of synthetic frameworks capable of capturing causality and curvature in low-regularity settings.
The theory of Lorentzian length spaces \cite{KS18} provides such a framework, extending the philosophy of metric geometry to the Lorentzian world and allowing one to study causality theory and curvature bounds without assuming a smooth manifold structure. 
In this work, we propose a synthetic notion of timelike total curvature and use it to show that timelike curves of prescribed total curvature between two fixed points cannot be arbitrarily short in $\ell$-length. 
In fact, we derive a sharp length estimate whose extremal curves are shown to be $\ell$-length-preserving images of geodesic bi-segments whose interior angle coincides with the prescribed total curvature.

In positive signature, the total curvature captures the turning behaviour of a curve and records the total deviation from being geodesic. 
This concept has been studied extensively, both in smooth and non-smooth settings \cite{Mil50, Gra89, ABG2010, BFG15, CS24}.
Most immediately relevant for us are the results established in \cite{ML2003, CM16}, which give upper bounds on the lengths of curves in CAT($k$) spaces based merely on bounds on total curvature and chord length.
The notion of total curvature in CAT($k$) spaces was originally introduced in \cite{AB98}, see also \cite{LMM2010}, and is based on the idea of summing interior angles of approximating inscribed polygons to yield a natural notion of curvature.

The Lorentzian analogue is considerably subtler, as geometric arguments must additionally take into account the \emph{causal structure} of the space.
Furthermore, the time separation function $\ell$, which determines the geometry of the space, satisfies a \emph{reverse} triangle inequality (along causal chains), and geodesics \emph{maximise} their length as opposed to minimising it. 
Therefore, our result will give a \emph{lower} bound on lengths of timelike curves.
Similarly, we will show that bounded total curvature prohibits (local) collapse to \emph{null} length, i.e., $\ell$-rectifiability.

The development of hyperbolic angles \cite{BS23} allows us to give a definition of total curvature for timelike curves in purely synthetic terms, akin to the one used in the study of CAT($k$) spaces.
A crucial ingredient in the proof of the length estimate in \cite{ML2003} is the Majorisation theorem of Reshetnyak \cite{Res68}, which has recently been adapted to the Lorentzian setting \cite{BR25+}. 
For spacetimes, the notion of total curvature has been considered, e.g., in \cite{FGL01, BCO06, CFO07}, but, to the best of our knowledge, it has not previously been developed in the non-smooth setting. 

Let us state our main theorem.
\begin{theorem}[Lower bound on the length of timelike curves]
Let $X$ be a strongly causal and regular \LpLS and let $K \in \mathbb{R}$. 
Let $\gamma$ be a timelike curve in a $(\leq K)$-comparison neighbourhood. 
Let $x$ and $y$ be the endpoints of $\gamma$. 
Let $r=\ell(x,y)<D_K$ be the time separation of the endpoints and let $\kappa=\TC(\gamma)\in[0,+\infty)$ be the total curvature of $\gamma$. 
Then
\begin{equation}
\label{eq: main thm eq intro}
L(\gamma)\ge \mathcal{L}(K,r,\kappa),
\end{equation}
where
\begin{equation*}
\mathcal{L}(K,r,\kappa):=
\begin{cases}
\dfrac{2}{\sqrt{K}}\arcsin\!\left(\dfrac{\sin(\sqrt{K}r/2)}{\cosh(\kappa/2)}\right), & \mathrm{if}\,\,K>0,\\[1.2ex]
\dfrac{r}{\cosh(\kappa/2)}, & \mathrm{if}\,\,K=0,\\[1.2ex]
\dfrac{2}{\sqrt{-K}}\operatorname{arsinh}\!\left(\dfrac{\sinh(\sqrt{-K}r/2)}{\cosh(\kappa/2)}\right), & \mathrm{if}\,\,K<0.
\end{cases}
\end{equation*} 
If $\gamma$ is a timelike poly-segment and attains equality in \eqref{eq: main thm eq intro}, then $\gamma$ is a timelike geodesic in the case $\kappa=0$, and is the $\ell$-length-preserving image of an isosceles timelike bi-segment in $\lm K$ with total curvature $\kappa$ if $\kappa>0$.
\end{theorem}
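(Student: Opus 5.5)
The plan is to reduce the statement to timelike poly-segments, use the Lorentzian Majorisation theorem of \cite{BR25+} to move the problem into the model space $\lm K$, and there solve an extremal problem for timelike polygons with fixed chord and fixed total turning.

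\textbf{Step 1: reduction to poly-segments.} Take partitions $x=\gamma(t_0)\ll\gamma(t_1)\ll\dots\ll\gamma(t_n)=y$ and the inscribed timelike poly-segments $\sigma_n$ (geodesics between consecutive vertices, which exist in the comparison neighbourhood). By definition $\TC(\gamma)$ is the supremum (or limit under refinement) of the sums of exterior hyperbolic angles at interior vertices, so $\TC(\sigma_n)\le\TC(\gamma)=\kappa$. Rectifiability (finite total curvature) gives $L(\sigma_n)=\sum\ell(\gamma(t_{i-1}),\gamma(t_i))\to L(\gamma)$ under refinement. Since $\mathcal L(K,r,\cdot)$ is decreasing in $\kappa$, it suffices to prove $L(\sigma)\ge\mathcal L(K,r,\TC(\sigma))$ for poly-segments $\sigma$, and then pass to the limit.

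\textbf{Step 2: majorisation.} Close $\sigma$ with the geodesic from $x$ to $y$ to obtain a timelike polygon. By the Lorentzian Majorisation theorem, there is a convex region in $\lm K$ bounded by a polygon $\bar\sigma\cup[\bar x,\bar y]$ with the same side lengths, and a majorising map that preserves $\ell$-length along the boundary. Under this map the angles at the vertices of $\bar\sigma$ are at most as large as, or at least comparable to, those of $\sigma$, since the interior hyperbolic angles of the comparison configuration bound the exterior angles of $\sigma$ from below in upper curvature bound spaces. The upshot is that $\TC(\bar\sigma)\le\TC(\sigma)$, $L(\bar\sigma)=L(\sigma)$, and $\ell(\bar x,\bar y)=r$. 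This reduces everything to a convex timelike polygon in $\lm K$.

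\textbf{Step 3: the model problem.} In $\lm K$, show that among convex timelike polygonal curves from $\bar x$ to $\bar y$ with total turning $\le\kappa$, the length is minimised by the isosceles bi-segment with exterior angle $\kappa$. I would do this by induction on the number of vertices. Replace two adjacent vertices by one vertex at the intersection of the extended outer edges; this keeps the total turning (by the angle-sum/Gauss–Bonnet relation for timelike triangles) and decreases length by the reverse triangle inequality. For a bi-segment with legs $a,b$ and exterior angle $\kappa$, apply the hyperbolic law of cosines in $\lm K$. For $K=0$ this gives $r^2=a^2+b^2+2ab\cosh\kappa$. Minimising $a+b$ at fixed $r$ gives $a=b$ and $r=2a\cosh(\kappa/2)$, that is $L=r/\cosh(\kappa/2)$. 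The $K\neq0$ cases follow in the same way from the spherical or hyperbolic law of cosines via the half-angle formulas, giving the $\arcsin$ and $\operatorname{arsinh}$ expressions.

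\textbf{Step 4: equality.} If equality holds for a poly-segment, every inequality in Steps 2–3 must be tight. Strict inequality in the vertex-merging step forces at most two segments. Strictness of the minimisation forces $a=b$. The equality case of majorisation then yields the $\ell$-length-preserving image statement. When $\kappa=0$, it yields $L=r$, so $\gamma$ is a geodesic.

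\textbf{Main obstacle.} I expect Step 2 to be the hardest: precisely comparing the angles of $\sigma$ with those of the majorising polygon, with the correct inequality direction for Lorentzian angle comparison under upper bounds. Causal issues must also be handled, namely keeping all comparison triangles timelike and within size $D_K$.
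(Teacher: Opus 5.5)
\textbf{There is a genuine gap in Step 3, and it affects the case $K<0$.} You claim that replacing adjacent vertices $p_i,p_{i+1}$ of the model polygon by the intersection point $q$ of the extended edges $[p_{i-1},p_i]$ and $[p_{i+1},p_{i+2}]$ ``keeps the total turning''. This holds only for $K=0$.

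The new turning at $q$ is the middle-vertex angle $\ma_q(p_i,p_{i+1})$ of the triangle $\Delta(p_i,q,p_{i+1})$. The two removed turnings are its two time-endpoint angles, since hyperbolic angles do not see the time orientation of the extended edges. By Lemma \ref{lem: angle sum minkowski}, the total turning therefore changes by exactly $-K\vol(\Delta(p_i,q,p_{i+1}))$.

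\begin{itemize}
\item For $K>0$ this is harmless. The polygon becomes shorter and less curved, and $\mathcal L(K,r,\cdot)$ is decreasing.
\item For $K<0$ the merged polygon is shorter \emph{and} strictly more curved. The inductive hypothesis then only gives $L\ge\mathcal L(K,r,\kappa')$ with $\kappa'>\kappa$, which is weaker than what you need.
\end{itemize}

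Nothing in your argument trades the length lost, $\ell(p_i,p_{i+1})-\ell(p_i,q)-\ell(q,p_{i+1})$, against the turning gained, $|K|\vol$. You also cannot fall back on the $K=0$ estimate: by convexity of $\sinh$, the claimed bound for $K<0$ is strictly larger than $r/\cosh(\kappa/2)$. So the case $K<0$ is not proved. For $K\ge 0$ your merging is a legitimate alternative route, provided you check that $q$ exists with $p_i\ll q\ll p_{i+1}$ within the size bounds.

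\textbf{How the paper avoids this.} It fixes length rather than turning. Lemma \ref{deformation2} slides a vertex, keeping $\ell(p_1,p_2)+\ell(p_2,p_3)$ constant, until two consecutive segments align. An explicit Law of Cosines derivative computation then shows the total curvature does not increase, for every sign of $K$. Lemma \ref{lem:positive_tri_segment_estimate} handles the three-segment case for $K>0$ when the $D_K/2$ hypothesis fails. Finally, Lemma \ref{lem: bi-segment and rigidity} shows the isosceles bi-segment has least curvature among bi-segments of fixed length and chord.

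\textbf{Two smaller points.}
\begin{itemize}
\item In Step 2 the actual mechanism is that the majorising map is anti-Lipschitz. For three consecutive vertices this gives $\ell(\tilde p,\tilde r)\le\ell(p,r)=\ell(\bar p,\bar r)$ with equal adjacent sides. Hence $\ma_{\tilde q}(\tilde p,\tilde r)\le\ma_{\bar q}(\bar p,\bar r)\le\ma_q(p,r)$, by monotonicity in the Law of Cosines and then angle comparison.
\item In Step 1 the bound $\kappa^*(\sigma_n)\le\TC(\gamma)$ is only established for $K\le 0$, because $\TC$ is in general a $\limsup$. Instead, choose $\sigma_n$ with $|\sigma_n|\to 0$ and $\kappa^*(\sigma_n)\to\kappa$. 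Use Lemma \ref{Length as limit} for $L(\sigma_n)\to L(\gamma)$; this relies on continuity of $\ell$, not on rectifiability. Then use continuity of $\mathcal L$ in $\kappa$, and Proposition \ref{prop: curvatures agree} to identify $\TC$ with $\kappa^*$ for poly-segments.
\end{itemize}
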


\section{Preliminaries}\label{subsection 2.1}
We first recall a few basic notions from non-smooth Lorentzian geometry. For further details, we refer to \cite{KS18, BS23, BKR24}.

\begin{definition}[\LpLSn]
\label{def: LpLS}
A \emph{Lorentzian pre-length space} is a quintuple $(X,\mathsf d,\ll,\leq,\ell)$ such that $(X,\mathsf d)$ is a metric space, $\leq$ is a reflexive and transitive relation, $\ll$ is a transitive relation contained in $\leq$, and the time separation function $\ell\colon X\times X\to[0,+\infty]$ satisfies $\ell(x,z)\geq \ell(x,y)+\ell(y,z)$ whenever $x\leq y\leq z$, and $\ell(x,y)>0$ if and only if $x\ll y$.
\end{definition}

Here, the only interplay between the auxiliary metric $\mathsf d$ and the time separation $\ell$ is topological, and our constructions do not rely on this exact axiomatisation. Other proposed axiomatisations of non-smooth Lorentzian spaces would work equally well.

Next, we recall the definition of the length and the notion of rectifiability for a future-directed causal curve, i.e., for a curve $\gamma\colon [a,b] \to X$ such that $\gamma(s) \leq \gamma(t)$ whenever $s < t$. Timelike and past-directed curves are defined analogously.
\begin{definition}[{$\ell$-length, cf.~\cite[Definition~2.24]{KS18}}]\label{length}
Let $\gamma \colon [a,b] \to X$ be a future-directed causal curve. Then we define its \emph{$\ell$-length} (or just \emph{length}) by
\begin{equation}\label{definition of length}
L(\gamma)
:= \inf \left\{
\sum_{i=0}^{N-1} \ell\bigl(\gamma(t_i),\gamma(t_{i+1})\bigr)
\,:\,
N \in \mathbb{N},\;
a=t_0<t_1<\cdots<t_N=b
\right\}.
\end{equation}
\end{definition}

The definition of $\ell$-length, along with many subsequent ones, evidently applies equally well to past-directed curves. 
Unless stated otherwise, we shall therefore take all causal curves to be future-directed.

\begin{definition}[{Rectifiability, cf.~\cite[Definition~2.29]{KS18}}]
\label{rectifiable of causal curve}
A causal curve $\gamma\colon [a,b]\to X$ is called \emph{rectifiable} if, for all $a\le t_1<t_2\le b$, it holds that
\begin{equation*}
L\bigl(\gamma|_{[t_1,t_2]}\bigr)>0. 
\end{equation*}
\end{definition}

This notion of rectifiability should be understood as dual to the one used in metric geometry, where refining a partition can only increase length, and rectifiability thereby provides a control on the oscillations of the curve. In the Lorentzian setting, in which the time separation satisfies the reverse triangle inequality, and refining can only decrease the length, the degenerate behaviour that is excluded by this notion of rectifiability is (local) collapse to null length.

Since some of our results are formulated for (smooth) spacetimes and the word \emph{geodesic} is slightly ambiguous, we try to emphasise the distinction between geodesics (in the smooth sense) and \emph{maximising geodesics} ($\ell$-maximisers). 
For a maximising geodesic from $x$ to $y$, we also use the notation $[x,y]$, and we may also refer to such a curve as a (geodesic) segment. 

The following two properties ensure that the various notions of curvature bounds that we introduce below are, in fact, equivalent.

\begin{definition}[Causal properties]
\label{def: causal properties}
Let $X$ be a \LpLSn. 
\begin{enumerate}
\item $X$ is called \emph{strongly causal} if the set of all timelike diamonds forms a subbasis for the topology. 
\item $X$ is called \emph{regular} if maximising geodesics have a causal character. 
That is, if $\ell(x,y) > 0$, then any geodesic between $x$ and $y$ is timelike (and does not contain a null segment). 
\end{enumerate}
\end{definition}

For the sake of completeness, we restate the definition of triangle comparison here, and refer the reader to \cite{BKR24, BR25+} for further equivalent notions.
By $\lm{K}$ we denote the Lorentzian model space of constant curvature $K$. 
The constant $D_K$ appearing below is given by $\pi/\sqrt{K}$ if $K > 0$, and by $+ \infty$ if $K \leq 0$. 
We say a triangle $\Delta(x,y,z)$ \emph{satisfies size bounds} if $\ell(x,z) < D_K$, in which case there exists a unique comparison triangle in $\lm{K}$, cf.\ \cite[Lemma 2.1]{AB08}. 
Unless explicitly stated otherwise, we assume all triangles satisfy size bounds.

\begin{definition}[Triangle comparison]
\label{def: triangle comparison}
Let $X$ be a \LpLSn. An open subset $U$ is called a $(\leq K)$-comparison neighbourhood in the sense of triangle comparison if:
\begin{enumerate}[label=(\roman*)]
\item $\ell$ is continuous on $(U\times U) \cap \ell^{-1}([0,D_K))$, and this set is open; 
\item For all $x \ll y$ in $U$ with $\ell(x,y)<D_K$, there exists a connecting maximising geodesic inside $U$;
\item Let $\Delta (x,y,z)$ be a timelike triangle in $U$, with $p,q$ two points on the sides of $\Delta (x,y,z)$. Let $\Delta(\bar{x}, \bar{y}, \bar{z})$ be a comparison triangle in $\lm{K}$ for $\Delta (x,y,z)$ and $\bar{p},\bar{q}$ comparison points for $p$ and $q$, respectively. Then 
\begin{equation*}
\label{eq: timelike triangle comparison inequality}
\ell(p,q) \geq \ell(\bar{p}, \bar{q}) \, .
\end{equation*}
\end{enumerate}
$X$ is said to have timelike curvature bounded from above by $K$ if it is covered by $(\leq K)$-comparison neighbourhoods. 
$X$ is said to have timelike curvature globally bounded from above if $X$ itself is a comparison neighbourhood. 
\end{definition}

Next, we recall the definition of angles in \LpLSsn, initially introduced in \cite{BS23}.

\begin{definition}[Angles]
Let $X$ be a \LpLS and let $\alpha,\beta\colon [0,\eps)\rightarrow X$ be two timelike curves (where we permit one or both of the curves to be past-directed) with $x:=\alpha(0)=\beta(0)$.
Define $D$ as the set of points $(s,t) \in [0,\eps)^2$ such that $\alpha(s)$ and $\beta(t)$ are timelike related and the timelike triangle formed by $x, \alpha(s)$ and $\beta(t)$ satisfies size bounds. 
Then we define the \emph{angle} 
\begin{equation}
\label{eq: def angle}
\ma_x(\alpha,\beta)=\limsup_{\substack{(s,t)\in D \\ s,t\to 0}}\tilde{\ma}_x^{K}\big(\alpha(s),\beta(t)\big),
\end{equation}
where $\tilde{\ma}_x^{K}(\alpha(s),\beta(t))$ is the \emph{$K$-comparison angle}, i.e., the respective (hyperbolic) angle in the comparison triangle in $\lm{K}$. 
\end{definition}

Recall that \eqref{eq: def angle} is independent of the choice of $K$, cf.\ \cite[Proposition 2.14]{BS23}, and that if $X$ is regular and satisfies the assumptions of Definition \ref{def: triangle comparison}, the $\limsup$ in \eqref{eq: def angle} is a limit, cf.\ \cite[Lemma 4.10]{BS23}. 
We may also drop the decoration $K$ when dealing with comparison angles.
Given three timelike related points $x,y,z$ in a \LpLS $X$, we denote by $\ma_y(x,z)$ the angle at $y$ between $[x,y]$ and $[y,z]$. 
In our setting, geodesics are either unique, cf.\ \cite[Theorem 4.7]{BNR25}, or the choice of geodesic is implicitly clear, so this notation is justified. 
Further recall the \emph{sign} of the angle, here denoted by $\epsilon$, which is $-1$ if the angle is measured at a time-endpoint, and $+1$ otherwise. 
We then introduce the \emph{signed angle} as $\ma_x^S(y,z):=\epsilon\ma_x(y,z)$. 

Let us mention two more notions of curvature bounds that are important for us, cf., respectively, \cite[Definition 3.11]{BKR24} and \cite[Lemma 4.8]{BR25+}. 

\begin{definition}[Angle comparison and four-point condition]
\label{def: angle and 4pt}
Assume a strongly causal and regular \LpLS $X$ can be covered by neighbourhoods as in Definition \ref{def: triangle comparison}. 
Then there also exists a cover of neighbourhoods of $X$ where (iii) can be replaced with either of the following:
\begin{enumerate}[label=(\roman*)]
\item For any timelike triangle $\Delta(x,y,z)$ in $U$ we have 
\begin{equation*}
\ma^S_x(y,z) \geq \tma_x^{K,S}(y,z), \qquad \ma^S_y(x,z) \geq \tma_y^{K,S}(x,z), \qquad \ma^S_z(x,y) \geq \tma_z^{K,S}(x,y).
\end{equation*}
\item For any $x_1 \ll x_2 \ll x_3 \ll x_4$ in $U$, we have 
\begin{equation*}
\tilde{\ma}_{x_1}(x_2,x_3)\leq\tilde{\ma}_{x_1}(x_2,x_4)+\tilde{\ma}_{x_1}(x_4,x_3), \qquad \tilde{\ma}_{x_2}(x_1,x_4)\leq\tilde{\ma}_{x_2}(x_1,x_3)+\tilde{\ma}_{x_2}(x_3,x_4).
\end{equation*}
\end{enumerate}
In particular, if $X$ can be chosen as its own cover in Definition \ref{def: triangle comparison}, then the same is true here.
\end{definition}

Let us also state the Majorisation Theorem, one of our main tools, which was recently introduced in \cite{BR25+}. 
In fact, we shall only make use of the following very specialised case of said theorem, which easily follows by the nature of the proof of \cite[Theorem 3.9]{BR25+}.

\begin{theorem}[Majorisation Theorem, special case]
\label{thm: majorisation}
Let $X$ be a strongly causal and regular \LpLS with curvature bounded above by $K \in \R$ and let $U$ be a ($\leq K$)-comparison neighbourhood. 
Let $O \ll z$ with $\ell(O,z)<D_K$ in $U$ and let $\alpha$ be a poly-segment from $O$ to $z$. 
Then there exists a timelike poly-segment $\tilde \alpha$ in $\lm K$ between points $\tilde O$ and $\tilde z$ such that $\ell(O,z)=\ell(\tilde O, \tilde z)$ and $\tilde \alpha$ and $[\tilde O, \tilde z]$ bound a convex region $R$ and a map $f\colon R \to U$ that satisfies $\ell(x,y) \leq \ell(f(x),f(y))$ for all $x,y \in R$ and such that $\tilde \alpha$ and $[\tilde O, \tilde z]$, which form the boundary of $R$, are, respectively, mapped onto $\alpha$ and $[O,z]$ such that the $\ell$-length of the curves is preserved.
\end{theorem}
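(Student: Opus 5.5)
The plan is to prove this by induction on the number $n$ of segments of $\alpha=[x_0,x_1]\cup\dots\cup[x_{n-1},x_n]$, with $x_0=O$ and $x_n=z$, using a fan triangulation from $O$ as in the proof of \cite[Theorem 3.9]{BR25+}. Since $\alpha$ is timelike, transitivity of $\ll$ gives $O\ll x_i$ for all $i$. Since $\ell(O,z)<D_K$, the reverse triangle inequality gives $\ell(O,x_i)\le \ell(O,z)<D_K$. By Definition \ref{def: triangle comparison}(ii) we may therefore fix maximising geodesics $[O,x_i]$ in $U$. Each $\Delta_i:=\Delta(O,x_i,x_{i+1})$ is then a timelike triangle satisfying size bounds.

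\emph{Step 1 (one triangle).} For $n\le 2$ the region is a single comparison triangle $\Delta(\tilde O,\tilde x_1,\tilde z)$ in $\lm K$. (For $n=1$ it degenerates to a segment and $f$ is the obvious isometry.) Parametrise the triangle by the segments $[\tilde O,\tilde w]$, where $\tilde w$ runs over $[\tilde x_1,\tilde z]$. Define $f$ by sending the point at parameter $\lambda$ on $[\tilde O,\tilde w]$ to the point at the same parameter on $[O,w]$, where $w$ is the corresponding point of $[x_1,z]$ and $[O,w]$ is a chosen maximising geodesic.

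To check $\ell(p,q)\le\ell(f(p),f(q))$, first observe that the comparison triangles of $\Delta(O,w,x_1)$ and $\Delta(O,w,z)$ glue to the comparison triangle of $\Delta_1$ with $\tilde w$ pushed outward or inward. The hinge version of comparison (Definition \ref{def: angle and 4pt}(i)) shows that this gluing only shrinks time separations: this is the Lorentzian form of Alexandrov's lemma. Applying triangle comparison (iii) inside the subtriangle $\Delta(O,w,w')$ then gives the inequality for arbitrary $p,q$. The boundary maps onto $\alpha$ and $[O,z]$ preserve $\ell$-length by construction.

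\emph{Step 2 (gluing).} Place the comparison triangles $\tilde\Delta_i=\Delta(\tilde O,\tilde x_i,\tilde x_{i+1})$ successively in $\lm K$, glued along the common sides $[\tilde O,\tilde x_i]$ and all on the same side. Define $f$ on each piece as in Step 1. The result agrees on the common radial sides, since there it is just the $\ell$-length-preserving parametrisation of $[O,x_i]$.

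Next, show that the resulting region $R$ is convex and bounded by $\tilde\alpha$ and $[\tilde O,\tilde z]$, with $\ell(\tilde O,\tilde z)=\ell(O,z)$. The interior angle of $\tilde\alpha$ at $\tilde x_i$ is the sum of the comparison angles $\tma_{x_i}(O,x_{i-1})$ and $\tma_{x_i}(O,x_{i+1})$. By angle comparison (Definition \ref{def: angle and 4pt}(i), with signs), these are dominated by the true angles. The true angles in turn satisfy the triangle inequality for angles against the angle of $\alpha$ at $x_i$ between $[x_{i-1},x_i]$ and $[x_i,x_{i+1}]$. Together these give the Lorentzian convexity condition at $\tilde x_i$, i.e. the correct sign for the turning of $\tilde\alpha$. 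The final side $[\tilde O,\tilde z]$ is the side of $\tilde\Delta_{n-1}$, so its length is exactly $\ell(O,z)$.

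\emph{Step 3 (global majorisation).} Take $p,q\in R$. If $p\not\ll q$ in $\lm K$ there is nothing to show. Otherwise the segment $[p,q]$ lies in $R$ by convexity and crosses the radial sides at points $p=r_0,r_1,\dots,r_m=q$, in causal order, with consecutive $r_j$ lying in a common $\tilde\Delta_i$. Then
\[
\ell(p,q)=\sum_j\ell(r_j,r_{j+1})\le\sum_j\ell(f(r_j),f(r_{j+1}))\le \ell(f(p),f(q)),
\]
where the first inequality is Step 1 and the second is the reverse triangle inequality, since $f(r_j)\ll f(r_{j+1})$ follows from positivity of the majorised separations.

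I expect the main obstacle to be the convexity in Step 2 (together with the analogous gluing in Step 1). Tracking the signs of the hyperbolic angles at $\tilde x_i$ is delicate, because they depend on whether $x_i$ is a time-endpoint of the relevant triangle. This is where Definition \ref{def: angle and 4pt}(i) and regularity (so that angles are limits) must be used carefully.
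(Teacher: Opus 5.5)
There is a genuine gap, and it sits in Step~2: the fan of comparison triangles need not be convex, and more careful sign bookkeeping cannot fix this. At $\tilde x_i$ the fan has the future time-endpoint of $\tilde\Delta_{i-1}$ and the middle vertex of $\tilde\Delta_i$. So the turning of $\tilde\alpha$ there is a \emph{difference} of the two comparison angles, not a sum. Computing in $T_{\tilde x_i}\lm K$, with $\tilde\Delta_i$ on the opposite side of $[\tilde O,\tilde x_i]$ from $\tilde\Delta_{i-1}$, convexity at $\tilde x_i$ is equivalent to
\[
\tma^S_{x_i}(O,x_{i+1})+\tma^S_{x_i}(O,x_{i-1})=\tma_{x_i}(O,x_{i+1})-\tma_{x_i}(O,x_{i-1})\geq 0 .
\]
Angle comparison, $\tma^S\leq\ma^S$, bounds the left-hand side only from \emph{above}, by $\ma_{x_i}(O,x_{i+1})-\ma_{x_i}(O,x_{i-1})$, and that quantity has no sign.

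Here is a concrete counterexample. Let $X$ be the Minkowski plane, $K=0$, and in coordinates $(t,x)$ take $O=(0,0)$, $x_1=(1,0)$, $x_2=(2,\tfrac12)$, $z=(3,\tfrac45)$. The comparison triangles are congruent to the actual ones, so your fan is just the region bounded by $\alpha$ and $[O,z]$. The slopes $0,\tfrac12,\tfrac{3}{10}$ show that this region has a reflex vertex at $\tilde x_2$. Indeed $\tilde x_1\ll\tilde z$, yet $[\tilde x_1,\tilde z]$ leaves $R$, so Step~3 cannot even be run. Even when $\alpha$ is straight at $x_i$, strict comparison (e.g.\ $X=\lm{K'}$ with $K'<K$) gives $\tma_{x_i}(O,x_{i+1})<\ma_{x_i}(O,x_{i+1})=\ma_{x_i}(O,x_{i-1})<\tma_{x_i}(O,x_{i-1})$, which is again a reflex vertex. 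This is the Lorentzian counterpart of the metric fact that fans of comparison triangles in CAT($k$) spaces can have angle sums exceeding $\pi$, precisely because comparison angles dominate the true ones.

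What you are missing is Reshetnyak's convexification step, which is the real content of the theorem. Already for $X=\lm K$ the statement says that a non-convex region bounded by a timelike poly-segment and its chord is majorised by a convex one with the same boundary lengths, and this cannot follow from comparison inequalities alone. The paper does not reprove the statement; it defers to the proof of \cite[Theorem 3.9]{BR25+}.

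The classical route inducts by gluing a single comparison triangle $\Delta(\tilde O,\tilde x_{n-1},\tilde z)$ onto an already convex region majorising $[O,x_1]\cup\dots\cup[x_{n-2},x_{n-1}]$ with chord $[O,x_{n-1}]$. Whenever the glued region fails to be convex, it is replaced by a convex region in $\lm K$ with the same side lengths and chord length that majorises it. That replacement is a Lorentzian Alexandrov-type straightening lemma in the model plane. The induction then uses that compositions of $\ell$-non-decreasing, boundary-length-preserving maps are again of this type.

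Your Steps~1 and~3 are reasonable ingredients; Step~1 is only sketched, but it is the standard triangle case. Without the convexification step, however, you do not obtain the convex region $R$ that the statement asserts. The paper also needs that convexity later, in Lemmas~\ref{deformation2} and~\ref{lem:positive_tri_segment_estimate}.
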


Next, recall the definition of hyperbolic angles in a spacetime. 
If $\alpha$ and $\beta$ are two timelike curves (not necessarily future-directed) meeting at $p=\alpha(0)=\beta(0)$, then the \emph{hyperbolic angle} at $p$ between $\alpha$ and $\beta$ is 
\begin{equation}
\label{eq: def angle smooth}
\ma_p(\alpha,\beta):=\arcosh{\biggl(\frac{|g_p(\dot\alpha(0),\dot\beta(0))|}{\|\dot\alpha(0)\| \|\dot\beta(0)\|}\biggr)}, 
\end{equation}
where $\|v\|:=\sqrt{g_p(v,v)}$ for $v \in T_pM$. 
We will again use the notation $\ma_p(x,y)$ if the choice of curves is clear or irrelevant. 
If $(M,g)$ is strongly causal, then by \cite[Proposition 2.13 \& Corollary 4.7]{BS23}, the two notions of angle \eqref{eq: def angle} and \eqref{eq: def angle smooth} agree. 

Next, we recall the Law of Cosines.\footnote{Notice that the cases depending on the sign of $K$ are switched when comparing with \cite{BS23}. 
This is because we use the convention $(+,-,\ldots,-)$ for the signature of the metric $g$, as we will state below. 
See \cite[Remark 2.3]{RXZ26+} and \cite[Section 2.1]{GRZ26+} for more details.}

\begin{proposition}[{Law of Cosines, cf.~\cite[Lemma~2.4 \& Remark~2.5]{BS23}}]
\label{prop: law of cosines monotonicity}
Let $p,q,r \in \mathbb{L}^2(K)$ form a timelike triangle
(not necessarily in this order). Let $a=\max\{\ell(p,q),\ell(q,p)\}>0,
b=\max\{\ell(q,r),\ell(r,q)\}>0,$ and $c=\max\{\ell(p,r),\ell(r,p)\}.$ Let $\omega=\ma_q(p,r)$ be the hyperbolic angle at $q$ and let $\epsilon$
be its sign. Let $s=\sqrt{|K|}$. Then we have:
\[
\begin{cases}
a^2+b^2=c^2-2ab\epsilon\cosh(\omega), & K=0,\\[0.4em]
\cos(sc)=\cos(sa)\cos(sb)-\epsilon\cosh(\omega)\sin(sa)\sin(sb), & K>0,\\[0.4em]
\cosh(sc)=\cosh(sa)\cosh(sb)+\epsilon\cosh(\omega)\sinh(sa)\sinh(sb), & K<0.
\end{cases}
\]
\end{proposition}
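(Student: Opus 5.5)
The plan is to verify the three formulas by a direct computation in standard models of $\lm K$, treating $K=0$ first and then reducing $K\neq 0$ to the same linear algebra in an ambient flat space of one dimension higher. Throughout I use the signature $(+,-)$ convention, so timelike vectors satisfy $g(v,v)>0$. The basic tool is the reverse Cauchy--Schwarz inequality for timelike vectors $u,v$:
\[
|g(u,v)|=\|u\|\|v\|\cosh\ma(u,v).
\]
Moreover, $g(u,v)>0$ exactly when $u,v$ lie in the same time cone. This is just \eqref{eq: def angle smooth} rewritten.

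For $K=0$, take $\lm 0$ to be the Minkowski plane. Let $u,v$ be the initial velocities at $q$ of the segments $[q,p]$ and $[q,r]$, scaled so that $\|u\|=a$ and $\|v\|=b$. Then $p=q+u$ and $r=q+v$. There are two cases.
\begin{itemize}
\item If $q$ is not a time-endpoint ($\epsilon=+1$), say $p\ll q\ll r$, then $q-p$ and $r-q$ are both future-directed, and $r-p=(q-p)+(r-q)$. Expanding $g(r-p,r-p)$ gives $c^2=a^2+b^2+2ab\cosh\omega$.
\item If $q$ is a time-endpoint ($\epsilon=-1$), say $q\ll p$ and $q\ll r$ (the past case is symmetric), then $u,v$ lie in the same cone and $c^2=g(u-v,u-v)=a^2+b^2-2ab\cosh\omega$.
\end{itemize}
Both identities are exactly $a^2+b^2=c^2-2ab\epsilon\cosh\omega$. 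One point must be checked here: since the triangle is timelike, $u-v$ is causal, so $c^2$ really is the squared time separation.

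For $K\neq 0$, set $s=\sqrt{|K|}$ and realise $\lm K$ as a quadric in a flat space $\R^3$ of suitable signature, so that geodesics are intersections with planes through the origin.
\begin{itemize}
\item For $K>0$ (the anti-de Sitter-type model in this sign convention), use $\{x: \langle x,x\rangle = 1/K\}$ in signature $(+,+,-)$. Then timelike geodesics from $q$ have the form $\cos(st)\,q+\sin(st)\,w/s$, with $w$ a unit timelike vector tangent at $q$. Time separation is then characterised by $K\langle x,y\rangle=\cos(s\,\ell(x,y))$ whenever $\ell(x,y)<D_K$.
\item For $K<0$ (the de Sitter-type model), the same holds with $\cosh$ and $\sinh$, using the quadric $\{\langle x,x\rangle=-1/|K|\}$ in signature $(+,-,-)$.
\end{itemize}
I would then write $p=\cos(sa)q+\sin(sa)u/s$ and $r=\cos(sb)q+\sin(sb)v/s$, where $u,v$ are unit tangent vectors at $q$ orthogonal to $q$ (with the appropriate sign of $a$, i.e.\ $u$ future- or past-directed). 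Computing $K\langle p,r\rangle$ uses $K\langle q,q\rangle=1$, $\langle q,u\rangle=\langle q,v\rangle=0$ and $\langle u,v\rangle=\pm\cosh\omega$. This gives $\cos(sc)=\cos(sa)\cos(sb)\pm\sin(sa)\sin(sb)\cosh\omega$, and the analogous hyperbolic formula for $K<0$.

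The main obstacle is bookkeeping the signs, not the algebra. I would first check the $K>0$ sign choice against the flat case via a Taylor expansion as $s\to 0$. Then I would fix the sign of $\langle u,v\rangle$ in terms of whether $q$ is a time-endpoint: if $q$ is a time-endpoint, $u$ and $v$ point into the same cone. When $q$ lies in the middle, one of $u,v$ is past-directed, because it is the tangent of $[q,p]$ traversed backwards, so $\langle u,v\rangle=-\cosh\omega$. This yields the factor $-\epsilon$ for $K>0$ and $+\epsilon$ for $K<0$, matching the statement; the latter reflects the switched convention noted in the footnote. Finally, the size bound $c<D_K$ ensures that $\cos(sc)$ determines $c$ uniquely for $K>0$, so the model computation applies to the unique comparison configuration.
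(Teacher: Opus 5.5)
Your proposal is correct. The paper does not prove this proposition itself. It quotes \cite[Lemma~2.4 \& Remark~2.5]{BS23} and only interchanges the cases $K>0$ and $K<0$ to account for the $(+,-,\ldots,-)$ signature, as its footnote explains.

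What you give instead is a self-contained direct verification in Minkowski space and in the two ambient quadrics. Its advantage is that it shows exactly where that interchange comes from. Write $\langle u,v\rangle=-\epsilon\cosh\omega$ uniformly for the unit tangents at $q$, and use $K\langle q,q\rangle=1$ in both quadrics. Then the sign of $K$ enters only through the factor $K/s^2=\pm1$ in front of $\sin(sa)\sin(sb)\langle u,v\rangle$ (respectively $\sinh(sa)\sinh(sb)\langle u,v\rangle$). This factor yields $-\epsilon$ for $K>0$ and $+\epsilon$ for $K<0$, matching the statement.

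Two small points deserve a sentence each when you write this up.
\begin{itemize}
\item For $K>0$, the quadric $\{\langle x,x\rangle=1/K\}$ contains closed timelike curves. So $\lm K$ should be taken as (a region in) its universal cover. The identity $K\langle x,y\rangle=\cos(s\,\ell(x,y))$ should then be read for lifts joined by a timelike geodesic of length $<D_K$, which is maximising there.
\item Writing $p$ and $r$ via the geodesic formula from $q$ also requires $a,b<D_K$. This follows from the size bound on the longest side together with the reverse triangle inequality.
\end{itemize}
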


In contrast to the metric setting, fixing two side lengths and varying the third, \emph{all} angles are strictly monotonically increasing if the varied side is the longest, and decreasing if it is one of the short sides. 

Finally, we will make use of the local Gauss--Bonnet Formula. 
More precisely, we shall employ a specialised version of this formula, which easily follows from, e.g., \cite[Lemma 3.4]{BBCGRR26+}. 
See also \cite{Jee84}. 

\begin{lemma}[Gauss--Bonnet Formula]
\label{lem: angle sum minkowski}
Let $K \in \R$ and let $\Delta(x,y,z)$ be a timelike triangle in $\lm K$. 
Then 
\begin{equation*}
\ma_x^S(y,z) + \ma_y^S(x,z) + \ma_z^S(x,y) = -K \vol\big(\Delta(x,y,z)\big).
\end{equation*}
Here, by slight abuse of notation, $\vol(\Delta(x,y,z))$ denotes the area of the surface spanned by the triangle. 
In particular, in Minkowski space, we have $\ma_x^S(y,z) + \ma_y^S(x,z) + \ma_z^S(x,y) = 0$. 
\end{lemma}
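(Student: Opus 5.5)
We prove the Gauss--Bonnet identity by working in a model of $\lm K$ and treating the cases $K=0$ and $K\neq 0$ separately.

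\emph{Flat case.} For $K=0$ we argue directly in Minkowski space $\mathbb{R}^{1,1}$. Order the vertices causally, $x\ll y\ll z$, and write $u=y-x$, $v=z-y$. These are future timelike vectors with $z-x=u+v$. In two dimensions every future timelike unit vector has the form $(\cosh\theta,\sinh\theta)$, and the hyperbolic angle between two such vectors is $|\theta_1-\theta_2|$. Let $u,v,u+v$ have rapidities $\theta_u,\theta_v,\theta_w$. Since $u+v$ is a positive combination of $u$ and $v$, its rapidity $\theta_w$ lies between $\theta_u$ and $\theta_v$. The two angles at the time-endpoints $x$ and $z$ are $|\theta_u-\theta_w|$ and $|\theta_w-\theta_v|$, and both carry sign $-1$. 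The angle at the middle vertex $y$, between the past-directed $-u$ and the future-directed $v$, equals $|\theta_u-\theta_v|$ and carries sign $+1$. Because $\theta_w$ lies between the other two rapidities, the two endpoint angles add up to $|\theta_u-\theta_v|$. Hence the signed sum is $0$.

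\emph{Curved case.} For $K\neq0$ we realise $\lm K$ as the two-dimensional (anti-)de Sitter space and apply the smooth Gauss--Bonnet theorem for Lorentzian surfaces to the region bounded by the triangle. This is the formula cited from \cite{BBCGRR26+} and \cite{Jee84}. The sides are geodesics, so they contribute no geodesic-curvature terms. The Gaussian curvature is constant and equal to $K$, so the area integral becomes $K\vol(\Delta)$.

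The substantive step is to check how the Lorentzian exterior-angle terms are converted into the signed hyperbolic angles $\ma^S$. That is, we must confirm that the corners at the time-endpoints contribute with the opposite sign to the corner at the middle vertex, and that the curvature term ends up with sign $-K$ in the paper's $(+,-)$ convention.

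We can cross-check this sign by a first-order computation with the Law of Cosines, Proposition~\ref{prop: law of cosines monotonicity}. Consider small triangles with fixed angle data. Compared with the flat case, the deviation of the signed angle sum is, to leading order, $-K$ times the Minkowski area. This fixes the sign independently of the choice of convention.

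The identification of corner terms with signed angles is the main obstacle. We would handle it concretely by comparing with the flat computation above, which already shows the pattern: the endpoint angles carry sign $-1$, the middle angle carries sign $+1$, and in the flat case they sum to zero.
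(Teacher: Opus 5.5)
Your proposal is correct and follows essentially the paper's route. The paper gives no independent argument; it simply specialises the smooth Lorentzian Gauss--Bonnet formula of \cite[Lemma 3.4]{BBCGRR26+} (see also \cite{Jee84}) to a geodesic triangle in the constant-curvature surface $\lm K$, exactly as in your curved case. Your rapidity computation is a correct self-contained proof of the Minkowski case, and your first-order Law of Cosines check does confirm the sign $-K$: for an isosceles triangle with legs $a$ and base $2a\cosh\phi_0$, the signed angle sum is $-Ka^2\cosh\phi_0\sinh\phi_0+O(K^2)$, which is $-K$ times its Minkowski area.
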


\section{Total curvature of timelike curves}
\label{compatibility}

In this section, we first recall the smooth definition of the total curvature of (timelike) curves, then propose a corresponding non-smooth analogue, and show their equivalence in strongly causal spacetimes. 
We also give an alternative formulation of total curvature for non-positively curved spaces. 

Let $(M,g)$ be a spacetime with signature $(+, -, \dots, -)$ and let $\gamma\colon  [a,b]\rightarrow M$ be a timelike $C^2$ curve parametrised by proper time. Then $T(s):=\dot\gamma(s)$ satisfies
\begin{equation*}
g\big(T(s),T(s)\big)\equiv 1.
\end{equation*}
Differentiating the above equation along $\gamma$, we obtain $ g(\nabla_TT(s),T(s))=0$, which implies that $\nabla_T T(s)$ is spacelike, i.e., $ g(\nabla_TT(s),\nabla_TT(s))\le 0$. 
Denote by $$\kappa(s):=\sqrt{-g\big(\nabla_TT(s),\nabla_TT(s)\big)}\in [0,+\infty)$$ the curvature of $\gamma$ at the point $\gamma(s)$. The \emph{(smooth) total curvature} of $\gamma$ is then computed by
\begin{equation}
\label{deftc2}
\int_a^b \kappa(s) \,\mathrm{d}s.
\end{equation}

\begin{definition}[Timelike poly-segments]
\label{def: poly-segments}
A \emph{timelike poly-segment} in a Lorentzian pre-length space $X$ is a timelike curve $\sigma\colon [a,b]\to X$ together with a partition $a=t_0<t_1<\cdots<t_n=b$ such that for each $i=1,\dots,n$, 
the respective restriction $\sigma|_{[t_{i-1},t_i]}$ is a (non-constant) maximising geodesic from $p_{i-1}$ to $p_i$, where we have set $p_i:=\sigma(t_i)$. 

If $n=2$, then $\sigma$ is called a \emph{timelike bi-segment}. 
\end{definition}

Since we never deal with poly-segments that contain null segments, we will occasionally drop the adjective `timelike' to increase readability. 

\begin{definition}[Total rotation and curvature]
Let $X$ be a \LpLS and let $\sigma$ be a timelike poly-segment as in Definition \ref{def: poly-segments}. 
The \emph{total rotation} $\kappa^*(\sigma)\in[0,+\infty)$ of $\sigma$ is defined as the sum of the angles at the interior vertices:
\begin{equation*}
\kappa^*(\sigma):=\sum_{i=1}^{n-1}\ma_{p_i}(p_{i-1},p_{i+1}).
\end{equation*}

Let $\gamma\colon [a,b]\to X$ be a timelike curve. 
A timelike poly-segment $\sigma$ is said to be \emph{inscribed} in $\gamma$ if there exists a partition $a=t_0<t_1<\cdots<t_n=b$ and a parametrisation of $\sigma$ on $[a,b]$ such that $\sigma(t_i)=\gamma(t_i)$ for $0\leq i\leq n$. 
Denote by
\begin{equation*}
|\sigma|:=\max_{0\leq i\leq n-1}(t_{i+1}-t_i)
\end{equation*}
the \emph{mesh-size} of $\sigma$. 
We define the \emph{(synthetic) total curvature} of $\gamma$ by
\begin{equation}
\label{deftc1}
\mathrm{TC}(\gamma):=\limsup_{|\sigma|\to 0}\kappa^*(\sigma)=\lim_{\eps\to 0^+}\sup_{\sigma\in\Sigma_\eps(\gamma)}\kappa^*(\sigma),
\end{equation}
where $\Sigma_\eps(\gamma)$ denotes the collection of timelike poly-segments $\sigma$ inscribed in $\gamma$ with $|\sigma|<\eps$. 
\end{definition}

We now show that, for timelike curves in strongly causal spacetimes, the formulas \eqref{deftc1} and \eqref{deftc2} agree.\footnote{Strong causality is imposed to guarantee that \eqref{deftc1} is independent of the notion of angle, \eqref{eq: def angle} or \eqref{eq: def angle smooth}, we consider. 
If we instead defined \eqref{deftc1} with \eqref{eq: def angle smooth} in mind, the compatibility follows for any spacetime.} 
The proof is similar in spirit to the corresponding arguments in the Riemannian setting, see \cite[Theorem 3.4]{LMM2010}.
\begin{lemma}[Derivative of the hyperbolic angle]
\label{lem}
Let $(M,g)$ be a spacetime, and let $\gamma \colon [a,b] \to M$ be a timelike $C^2$ curve parametrised by proper time. 
Denote by $T(s)$ the tangent vector to $\gamma$ at $\gamma(s)$. 
For fixed $s \in (a,b)$, define
\begin{equation*}
A(t):=T(s)+\frac12 t\nabla_TT(s)+\rho(t),
\end{equation*}
where
\begin{equation*}
\rho(t):=
\begin{cases}
\frac{\Gamma(t)}{t}-T(s)-\frac{1}{2}t\nabla_TT(s), & t\neq 0,\\
0, & t=0,
\end{cases} 
\qquad \Gamma(t):=\exp_{\gamma(s)}^{-1}\bigl(\gamma(s+t)\bigr).    
\end{equation*}
Then there exists $\delta > 0$ such that if $|t|<\delta$, then $A(t)$ is future-directed timelike. 
For any $-\delta<r_1<0<r_2<\delta$, define
\begin{equation*}
\theta_{\gamma(s)}(r_1,r_2):=\arcosh\left(g_{\gamma(s)}\left(\frac{A(r_1)}{\sqrt{g_{\gamma(s)}(A(r_1),A(r_1))}},\frac{A(r_2)}{\sqrt{g_{\gamma(s)}(A(r_2),A(r_2))}}\right)\right), 
\end{equation*}
and $\theta_{\gamma(s)}(0,0)=0$.
Then 
\begin{equation*}
\theta_{\gamma(s)}(r_1,r_2) = \frac{\kappa(s)}{2}(r_2-r_1) + o(|r_1|+|r_2|).
\end{equation*}
In particular, 
\begin{equation}
\label{eq: partial derivatives angle}
\frac{\partial \theta_{\gamma(s)}}{\partial r_1}(0,0)=-\frac12 \kappa(s),
\qquad
\frac{\partial \theta_{\gamma(s)}}{\partial r_2}(0,0)=\frac12 \kappa(s).
\end{equation}
\end{lemma}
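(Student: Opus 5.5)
Work in normal coordinates at $p:=\gamma(s)$, where $\Gamma(t)=\exp_p^{-1}(\gamma(s+t))$ is a $C^2$ curve in $T_pM$ with $\Gamma(0)=0$. The plan has four steps: Taylor-expand $\Gamma$, use the expansion to show $A(t)$ is timelike near $t=0$, expand the inner product defining $\theta$, and then invert $\cosh$.

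\textbf{Step 1: Taylor expansion.} Since $d\exp_p$ at $0$ is the identity and the Christoffel symbols vanish at $p$ in normal coordinates, we have
\[
\Gamma'(0)=T(s), \qquad \Gamma''(0)=\nabla_TT(s).
\]
Taylor's theorem then gives $\Gamma(t)=tT(s)+\tfrac12t^2\nabla_TT(s)+o(t^2)$. Dividing by $t$ shows that $\rho(t)=o(t)$, so $\rho$ is continuous at $0$ with $\rho(0)=0$.

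\textbf{Step 2: $A(t)$ is future-directed timelike.} By Step 1, $A$ is continuous in $t$ with $A(0)=T(s)$. Since $T(s)$ is future-directed timelike and timelikeness is an open condition, there is $\delta>0$ such that $A(t)$ is future-directed timelike for $|t|<\delta$.

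\textbf{Step 3: expand the inner product.} Write $N:=\nabla_TT(s)$, so $g(T,N)=0$ and $g(N,N)=-\kappa^2$, and recall $g(T,T)=1$. Using $\rho(t)=o(t)$, the norms expand as
\[
g(A(t),A(t)) = 1 + \tfrac14t^2 g(N,N) + 2g(T,\rho) + o(t^2).
\]
Because $\rho$ is only known to be $o(t)$, the cross term $2g(T,\rho)$ is also only $o(t)$. To bound the angle we therefore separate the component of $A(t)$ along $T$ from its component orthogonal to $T$. Write
\[
\hat A(t)=\frac{A(t)}{\|A(t)\|}=c(t)\bigl(T+w(t)\bigr), \qquad w(t)\perp T,
\]
so that $w(t)$ is spacelike with $w(t)=\tfrac12tN+o(t)$. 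In the orthogonal splitting $T\oplus T^\perp$, two unit future timelike vectors $T+w_i$ (after normalisation) have hyperbolic angle equal, to first order, to the Euclidean length $|w_2-w_1|_{T^\perp}$, measured with respect to $-g$ on $T^\perp$. Indeed, hyperbolic space is locally Euclidean to first order near $T$: the hyperboloid chart $w\mapsto (T+w)/\sqrt{1+g(w,w)}$ has differential the identity at $w=0$. Hence
\[
\theta(r_1,r_2) = \bigl|w(r_2)-w(r_1)\bigr| + o(|r_1|+|r_2|) = \tfrac12\kappa\,|r_2-r_1| + o(|r_1|+|r_2|).
\]
Since $r_1<0<r_2$, this equals $\tfrac{\kappa}{2}(r_2-r_1)+o(|r_1|+|r_2|)$.

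\textbf{Step 4: derivatives.} The partial derivatives at $(0,0)$ follow immediately from Step 3, interpreted as one-sided derivatives along the axes using $\theta(0,0)=0$.

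\textbf{Main obstacle.} The delicate point is the first-order comparison between $\arcosh$ and the Euclidean distance in $T^\perp$. The function $\arcosh(1+x)\sim\sqrt{2x}$ is not differentiable at $x=0$, so the $o$-terms must be controlled at the level of $w$, not at the level of $\cosh\theta$. To handle this I would use the explicit formula
\[
\cosh\theta=\frac{1+g(w_1,w_2)}{\sqrt{(1+g(w_1,w_1))(1+g(w_2,w_2))}}
\]
and verify that $\cosh\theta-1=\tfrac12|w_1-w_2|^2+O(|w|^3|w_1-w_2|)$. This yields
\[
\theta=|w_1-w_2|\bigl(1+O(|w|)\bigr),
\]
which is sufficient.
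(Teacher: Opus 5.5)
Your proposal is correct. Its skeleton is the same as the paper's: Taylor-expand $\Gamma$ in normal coordinates, get $\rho(t)=o(t)$, use openness of the future timelike cone, then compute the angle to first order. The difference lies in how you neutralise the cross term $g(T,\rho)=o(t)$. You split off the $T$-component and use the Klein (gnomonic) chart $w\mapsto (T+w)/\sqrt{1+g(w,w)}$ to compare the hyperbolic angle with Euclidean distance in $T^\perp$. The paper never projects. It normalises to $U(t)=T+V(t)$, observes that $g(U,U)=1$ forces $g(T,V)=-\tfrac12 g(V,V)$, and obtains the exact chordal identity
\[
\cosh\theta(r_1,r_2)=1-\tfrac12\, g\bigl(V(r_1)-V(r_2),\,V(r_1)-V(r_2)\bigr).
\]
With $V(t)=\tfrac12 tN+o(t)$ this immediately gives $\cosh\theta=1+\tfrac{\kappa^2}{8}h^2+o(h^2)$, where $h:=|r_1|+|r_2|=r_2-r_1$, with no further expansion. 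Your route is more geometric and, done carefully, yields the sharper relative error $\theta=|w_1-w_2|(1+O(|w|^2))$, but it costs the extra expansion of the Klein formula.

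Two minor points. First, your ``main obstacle'' is overstated: an $o(h^2)$ error in $\cosh\theta-1$ already suffices, and this is exactly how the paper concludes. Indeed, $\cosh\theta-1\ge\theta^2/2$ gives $\theta=O(h)$, hence $\theta^2=\tfrac{\kappa^2}{4}h^2+o(h^2)$. Then $|\sqrt u-\sqrt v|\le\sqrt{|u-v|}$ gives $\theta=\tfrac\kappa2 h+o(h)$, including the case $\kappa=0$.

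Second, $\theta=|w_1-w_2|(1+O(|w|))$ does not follow from an error term $O(|w|^3|w_1-w_2|)$ unless $|w|^2\lesssim|w_1-w_2|$. This is harmless for two reasons. The error is in fact $O(|w|^2|w_1-w_2|^2)$: since $\cosh\theta-1\ge 0$ vanishes on the diagonal, its part linear in $w_2-w_1$ drops out. And in your application $|w|,|w_1-w_2|=O(h)$ anyway, so the error is $O(h^4)=o(h^2)$ and the first point applies.
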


\begin{proof}
Fix $s\in (a,b)$. 
Note that for $t \neq 0$, $A(t)=\frac1t \exp_{\gamma(s)}^{-1}(\gamma(s+t))$ is the initial velocity vector of the geodesic segment $[\gamma(s),\gamma(s+t)]$. 
Since $\gamma$ is not necessarily a geodesic, however, $A(t)$ need not be a unit vector.
Similarly, $\theta(r_1,r_2)$ denotes the hyperbolic angle at $\gamma(s)$ between the segments $[\gamma(s+r_1),\gamma(s)]$ and $[\gamma(s),\gamma(s+r_2)]$. 
By a standard computation in normal coordinates in a small neighbourhood of $\gamma(s)$, we know that $\Gamma(0)=0$, $\Gamma'(0)=T(s)$ and $ \Gamma''(0)=\nabla_TT(s)$. 
Combining this with a Taylor expansion, we have
\begin{equation*}
\Gamma(t)=tT(s)+\frac12 t^2\nabla_TT(s)+o(t^2) \quad \mathrm{around}\,\,0,
\end{equation*}
and hence $\rho(t)=o(t)$. By definition of $A(t)$, using that $g_{\gamma(s)}(T,\nabla_T T)=0$, we thus have $g_{\gamma(s)}(A(t), A(t)) = 1 + o(t)$ and so
\begin{equation}
\label{eq:expansionSqrA}
\frac{1}{\sqrt{g_{\gamma(s)}(A(t),A(t))}} = 1 + o(t).
\end{equation}
From now on, we omit the parameter $s$ in $T(s)$, $\nabla_T T(s)$ and $\kappa(s)$, and omit the base point $\gamma(s)$ in $g_{\gamma(s)}$ and $\theta_{\gamma(s)}$.
Setting
\begin{equation*}
\label{eq: definitition U(t)}
U(t):=\frac{A(t)}{\sqrt{g(A(t),A(t))}},
\end{equation*}
we get that
\begin{equation}
\label{eq:expansionU}
U(t) = T + \tfrac{1}{2}t\nabla_T T + o(t)
\end{equation}
by combining \eqref{eq:expansionSqrA} and using the definition of $A(t)$. Hence, there exists $\delta >0$ such that $U(t)$ is unit timelike for all $t \in (-\delta, \delta)$. Setting $V(t) = U(t) - T$, this gives
\begin{equation*}
1 = g\big(U(t), U(t)\big) = 1 + 2 g\big(T, V(t)\big) + g\big(V(t), V(t)\big),
\end{equation*}
and so $g(T,V(t)) = - \tfrac{1}{2}g(V(t), V(t))$. Now for $r_1, r_2 \in \R$ such that $-\delta < r_1 < 0 < r_2 < \delta$, we have
\begin{align*}
g\big(U(r_1), U(r_2)\big) &= 1 + g\big(T,V(r_1)\big) + g\big(T, V(r_2)\big) + g\big(V(r_1),V(r_2)\big)\\
&= 1 - \tfrac{1}{2}g\big(V(r_1) - V(r_2), V(r_1) - V(r_2)\big).
\end{align*}
Using the expansion \eqref{eq:expansionU}, we get
\begin{align*}
g\big(V(r_1) - V(r_2), V(r_1) - V(r_2)\big) &= \frac{(r_1-r_2)^2}{4}g(\nabla_T T, \nabla_T T) + o\bigl((|r_1| + |r_2|)^2\bigr) \\
&= -\frac{\kappa^2}{4}(r_1-r_2)^2 + o\bigl((|r_1| + |r_2|)^2\bigr)
\end{align*}
and thus,
\begin{equation*}
\cosh \bigl(\theta(r_1,r_2)\bigr) =g\big(U(r_1), U(r_2)\big) = 1 +\frac{\kappa^2}{8}(r_1-r_2)^2 + o\bigl((|r_1| + |r_2|)^2\bigr).
\end{equation*}
With the Taylor expansion $\cosh{x} = 1 +\tfrac{1}{2}x^2 + o(x^2)$, this implies
\begin{equation*}
\theta(r_1,r_2) = \frac{\kappa}{2}(r_2-r_1) + o(|r_1|+|r_2|), 
\end{equation*}
from which \eqref{eq: partial derivatives angle} follows directly. 
\end{proof}

\begin{proposition}[Notions of total curvature agree]
\label{propcoin}
Let $(M,g)$ be a strongly causal spacetime and let $\gamma\colon  [a,b]\rightarrow M$ be a timelike $C^2$ curve parametrised by proper time. 
Then
\begin{equation*}
\mathrm {TC}(\gamma)\stackrel{\textnormal{def}}{=}\lim_{\eps\to 0^+}\sup_{\sigma\in\Sigma_\eps(\gamma)}\kappa^*(\sigma)=\int_a^b \kappa(s) \,\mathrm{d}s.
\end{equation*}
\end{proposition}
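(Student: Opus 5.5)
The plan follows the Riemannian argument of \cite[Theorem 3.4]{LMM2010}, with Lemma \ref{lem} supplying the local expansion. Strong causality enters only to identify the synthetic angle \eqref{eq: def angle} with the hyperbolic angle \eqref{eq: def angle smooth}, via \cite[Proposition 2.13 \& Corollary 4.7]{BS23}. We may therefore compute $\kappa^*(\sigma)$ using \eqref{eq: def angle smooth}.

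First, since $\gamma$ is compact and $C^2$, we restrict to small mesh. Then every inscribed poly-segment has its consecutive vertices inside a uniformly sized convex normal neighbourhood, so each segment $[\gamma(t_{i-1}),\gamma(t_i)]$ is the unique timelike geodesic given by $\exp^{-1}$. Consequently the angle at the interior vertex $p_i=\gamma(t_i)$ equals $\theta_{\gamma(t_i)}(t_{i-1}-t_i,\,t_{i+1}-t_i)$ in the notation of Lemma \ref{lem}.

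The key step is to make Lemma \ref{lem} uniform in the base point $s$. Because $\gamma$ is $C^2$ and the exponential map and its inverse depend smoothly on the base point, the remainder $\rho$ in the expansion of $\Gamma$ satisfies $|\rho(t)|\le \omega(|t|)\,|t|$ with a modulus $\omega$ independent of $s\in[a,b]$. To see this, Taylor-expand $\Gamma_s(t)=\exp^{-1}_{\gamma(s)}(\gamma(s+t))$ with integral remainder and use uniform continuity of the second derivative on a compact set. Tracing through the proof of Lemma \ref{lem} then gives
\[
\Bigl|\theta_{\gamma(s)}(r_1,r_2)-\tfrac{\kappa(s)}{2}(r_2-r_1)\Bigr|\le \omega'(|r_1|+|r_2|)\,(|r_1|+|r_2|)
\]
uniformly in $s$, with $\omega'(h)\to0$ as $h\to0$. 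One point needs care: $\arcosh$ is not Lipschitz at $1$, so the step from $\cosh\theta$ to $\theta$ requires some attention. It is cleaner to note that $\cosh\theta-1 = \tfrac12 |V(r_1)-V(r_2)|^2_{\text{spacelike}}$ exactly, so $2\sinh(\theta/2)$ is the norm of a spacelike vector equal to $\tfrac12(r_2-r_1)\nabla_TT+o(|r_1|+|r_2|)$. The reverse triangle inequality for spacelike vectors in a plane then gives a uniform estimate for $\theta$ directly. I expect establishing this uniformity to be the main obstacle.

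Finally, we sum the estimate over a poly-segment $\sigma$ with partition $t_0<\dots<t_n$ and mesh less than $\eps$. Writing $\Delta_i=t_i-t_{i-1}$, we obtain
\[
\kappa^*(\sigma)=\sum_{i=1}^{n-1}\tfrac{\kappa(t_i)}{2}(\Delta_i+\Delta_{i+1})+O\Bigl(\omega'(2\eps)\sum_i(\Delta_i+\Delta_{i+1})\Bigr).
\]
The error term is at most $2\omega'(2\eps)(b-a)\to0$. The main sum is a combination of a left- and a right-endpoint Riemann sum for the continuous function $\kappa$, up to the boundary terms $\tfrac12\kappa(t_0)\Delta_1$ and $\tfrac12\kappa(t_n)\Delta_n$, which are $O(\eps)$. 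It therefore converges to $\int_a^b\kappa\,\mathrm ds$ uniformly over all $\sigma\in\Sigma_\eps(\gamma)$ as $\eps\to0$. Hence the supremum over $\Sigma_\eps(\gamma)$ converges to $\int_a^b\kappa$, which proves Proposition \ref{propcoin}.
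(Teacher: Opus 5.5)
Your proposal is correct and follows the paper's proof. You identify the vertex angles with $\theta_{\gamma(t_i)}(t_{i-1}-t_i,\,t_{i+1}-t_i)$, use the expansion of Lemma~\ref{lem} uniformly in $s$, and recognise the main term as the average of a left and a right Riemann sum of $\kappa$. Your uniform Taylor remainder for $\Gamma_s$ is a more careful version of the paper's brief compactness argument for uniformity. The step from $\cosh\theta$ to $\theta$ needs no reverse triangle inequality: $4\sinh^2(\theta/2)=-g\bigl(V(r_1)-V(r_2),V(r_1)-V(r_2)\bigr)=\tfrac{\kappa^2}{4}(r_2-r_1)^2+o\bigl((|r_1|+|r_2|)^2\bigr)$ holds uniformly, and $|x-y|\le\sqrt{|x^2-y^2|}$ for $x,y\ge0$ then finishes it.
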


\begin{proof}
Let $\sigma$ be a timelike poly-segment inscribed in $\gamma$ with sufficiently small mesh-size $|\sigma|$. Let $a=t_0<\cdots<t_n=b$ be a partition such that $p_i:=\sigma(t_i)=\gamma(t_i)$. For each interior vertex $p_i \,(1\le i\le n-1)$, set
\begin{equation*}
r^i_1:=t_{i-1}-t_i<0,\qquad r^i_2:=t_{i+1}-t_i>0.
\end{equation*}
Let us call the normalised timelike vectors $U(r^i_1)$ and $U(r^i_2)$ used in Lemma \ref{lem} $v_i$ and $w_i$, respectively. 
Then
\begin{equation*}
\ma_{p_i}(v_i, w_i)=\theta_{\sigma(t_i)}(r^i_1,r^i_2)=\theta_{\sigma(t_i)}(t_{i-1}-t_i,\ t_{i+1}-t_i).
\end{equation*}
By Lemma \ref{lem} and the first-order Taylor expansion of $\theta_{\sigma(t_i)}(r^i_1,r^i_2)$ around $(0,0)$,
\begin{align*}
\ma_{p_i}(v_i, w_i)
&=\frac{\partial\theta_{\sigma(t_i)}}{\partial r_1}(0,0)\,(t_{i-1}-t_i)
+\frac{\partial\theta_{\sigma(t_i)}}{\partial r_2}(0,0)\,(t_{i+1}-t_i)
+R(r^i_1,r^i_2)\\
&=\frac12\,\kappa(t_i)(t_i-t_{i-1})
+\frac12\,\kappa(t_i)(t_{i+1}-t_i)
+R(r^i_1,r^i_2),
\end{align*}
where $R(r^i_1,r^i_2) = o(|r_1^{i}|+|r_2^{i}|)$ denotes the remainder term.
By Lemma \ref{lem}, for each fixed $s \in [a,b]$ and for all $r_1, r_2 \in \R$ such that $-\delta < r_1 < 0 < r_2 < \delta$, we have 
\begin{equation*}
\theta_{\gamma(s)}(r_1,r_2) = \frac{\kappa(s)}{2}(r_2-r_1) + o(|r_1| + |r_2|).
\end{equation*}
Now, by compactness of $[a,b]$ and by continuity of $\theta_{\gamma(s)}$ and $\kappa(s)$ in $s$, this estimate is uniform in $s$ in the sense that, for each $\eps >0$, there exists some $\delta >0$ such that for all $s \in [a,b]$, it holds that
\begin{equation*}
\biggl|\theta_{\gamma(s)}(r_1,r_2) - \frac{\kappa(s)}{2}(r_2-r_1)\biggr| \leq \eps (|r_1|+|r_2|),
\end{equation*}
provided $|r_1| + |r_2| < \delta$. Hence, in our situation, 
for any $\eps>0$, there exists $\delta>0$ such that if $|r^i_1|+|r^i_2|<\delta$, then
\begin{equation*}
\big|R(r^i_1,r^i_2)\big|\leq \eps\big(|r_1^{i}|+|r_2^{i}|\big)
=\eps\bigl((t_i-t_{i-1})+(t_{i+1}-t_i)\bigr)
=\eps(t_{i+1}-t_{i-1}).
\end{equation*}
Thus, for $|\sigma|$ sufficiently small, we have 
\begin{equation*}
\sum_{i=1}^{n-1}\big|R(r^i_1,r^i_2)\big|
\leq \eps\sum_{i=1}^{n-1}(t_{i+1}-t_{i-1})
\leq 2\eps(b-a).
\end{equation*}
Therefore,
\begin{equation*}
\sum_{i=1}^{n-1}\ma_{p_i}(v_i, w_i)
=
\frac{1}{2}\sum_{i=1}^{n-1}\kappa(t_i)(t_i-t_{i-1})
+\frac{1}{2}\sum_{i=1}^{n-1}\kappa(t_i)(t_{i+1}-t_i)
+\sum_{i=1}^{n-1}R(r^i_1,r^i_2).
\end{equation*}
Letting $|\sigma|\to 0$ and $\eps\to 0$, we have
\begin{equation*}
\mathrm {TC}(\gamma)=\limsup_{|\sigma|\rightarrow 0} \sum_{i=1}^{n-1}\ma_{p_i}(v_i, w_i)
=\lim_{|\sigma|\rightarrow 0} \sum_{i=1}^{n-1}\ma_{p_i}(v_i, w_i)
=\int_a^b \kappa(s)\diff s. \qedhere
\end{equation*}
\end{proof}

Next, we give an alternative formulation of the total curvature on non-positively curved spaces. 
Namely, the total curvature of a timelike curve can be expressed as the supremum over all inscribed poly-segments, cf.\ \cite[Theorem 4.5]{LMM2010}. 
This follows easily as a corollary of the following statement. 

\begin{lemma}[Adding a vertex does not decrease total curvature]
\label{lem: TCpoly}
Let $X$ be a strongly causal and regular Lorentzian pre-length space and let $\gamma$ be a timelike poly-segment in a $(\leq 0)$-comparison neighbourhood. 
Denote the vertices of $\gamma$ by $p_0,\dots,p_n$. 
Let $\gamma'$ be a timelike poly-segment obtained from $\gamma$ by adjoining one new vertex $\bar{p}$, where the vertices of $\gamma'$ are given by
\[
\begin{cases}
\bar{p},p_0,\dots,p_n, & \mathrm{if }\,\, \bar{p}\in I^-(p_0),\\
p_0,\dots,p_i,\bar{p},p_{i+1},\dots,p_n, & \mathrm{if }\,\, \bar{p}\in I^+(p_i)\cap I^-(p_{i+1})\, \,\mathrm{ for\, some }\, \,i\in\{0,\dots,n-1\},\\
p_0,\dots,p_n,\bar{p}, & \mathrm{if }\, \,\bar{p}\in I^+(p_n).
\end{cases}
\]
Then
\begin{equation*}
\kappa^*(\gamma')\geq \kappa^*(\gamma).
\end{equation*}
\end{lemma}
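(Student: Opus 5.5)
The plan is to split into the three cases of the statement. The two endpoint cases are immediate. The interior case reduces to a triangle inequality for angles at the two old vertices, plus angle comparison and Gauss--Bonnet in the single small triangle cut off by $\bar p$.

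\emph{Endpoint cases.} Suppose $\bar p\in I^-(p_0)$. Then all interior angles of $\gamma$ remain interior angles of $\gamma'$, and the only new one is $\ma_{p_0}(\bar p,p_1)\ge 0$. Hence $\kappa^*(\gamma')=\kappa^*(\gamma)+\ma_{p_0}(\bar p,p_1)\ge\kappa^*(\gamma)$. The case $\bar p\in I^+(p_n)$ is symmetric.

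\emph{Interior case.} Let $\bar p\in I^+(p_i)\cap I^-(p_{i+1})$, and write $q:=p_{i+1}$. Only the angles at $p_i$ and $q$ change, and one new angle at $\bar p$ appears. If $p_i$ (resp.\ $q$) is an endpoint of $\gamma$, the corresponding old angle is simply absent, which only helps. So it suffices to show
\begin{equation*}
\ma_{p_i}(p_{i-1},\bar p)+\ma_{\bar p}(p_i,q)+\ma_{q}(\bar p,p_{i+2})\;\ge\;\ma_{p_i}(p_{i-1},q)+\ma_{q}(p_i,p_{i+2}).
\end{equation*}

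\emph{Step 1: triangle inequality at $p_i$ and $q$.} At $p_i$ we have one past-directed segment towards $p_{i-1}$ and two future-directed segments towards $\bar p$ and $q$. I would invoke the triangle inequality for angles between three timelike curves from \cite{BS23}, valid in our setting where angles are limits by regularity and the comparison assumption, in the form
\begin{equation*}
\ma_{p_i}(p_{i-1},q)\le \ma_{p_i}(p_{i-1},\bar p)+\ma_{p_i}(\bar p,q).
\end{equation*}
This is the hyperbolic analogue, where the angle from the past direction to a future direction is at most the sum through an intermediate future direction. Analogously, at $q$ I would use $\ma_q(p_i,p_{i+2})\le \ma_q(\bar p,p_{i+2})+\ma_q(p_i,\bar p)$. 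If the needed triangle inequality is not available in precisely this sign configuration, I would derive it from the four-point condition in Definition~\ref{def: angle and 4pt}(ii) by passing to the limit along the segments. I expect this to be the main technical obstacle: one must make sure the mixed past/future configuration yields the inequality in the correct direction.

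\emph{Step 2: the small triangle.} After Step 1, it remains to prove
\begin{equation*}
\ma_{p_i}(\bar p,q)+\ma_q(p_i,\bar p)\le \ma_{\bar p}(p_i,q).
\end{equation*}
Consider the timelike triangle $\Delta(p_i,\bar p,q)$ with $p_i\ll\bar p\ll q$, and apply angle comparison, Definition~\ref{def: angle and 4pt}(i), with $K=0$. This is allowed since we are in a $(\le 0)$-comparison neighbourhood, shrinking it if necessary as that definition permits. The vertices $p_i$ and $q$ are time-endpoints with sign $-1$, and $\bar p$ has sign $+1$. The signed inequalities therefore read
\begin{equation*}
\ma_{p_i}\le\tma_{p_i},\qquad \ma_q\le\tma_q,\qquad \ma_{\bar p}\ge\tma_{\bar p}.
\end{equation*}
By Lemma~\ref{lem: angle sum minkowski} in Minkowski space, the comparison triangle satisfies $\tma_{\bar p}=\tma_{p_i}+\tma_q$. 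Chaining these inequalities gives the required estimate, and combining it with Step 1 yields $\kappa^*(\gamma')\ge\kappa^*(\gamma)$.
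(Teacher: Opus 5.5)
Your proposal is correct and follows the paper's proof essentially step for step. Both arguments dispose of the endpoint cases by noting a nonnegative angle is added. In the interior case, both derive the angle triangle inequalities at $p_i$ and $p_{i+1}$ from the four-point condition by passing to the limit along the segments. Both then use signed angle comparison together with the Minkowski Gauss--Bonnet identity in $\Delta(p_i,\bar p,p_{i+1})$. Your handling of $i=0$ and $i=n-1$ by dropping the absent terms is equivalent to the paper's separate estimate $\beta\ge\omega+\eta\ge\omega$.
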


\begin{proof}  
If $n=1$, the statement is trivial. 
If $n \geq 2$, the first and third cases are immediate. 
Indeed, in these cases, the total rotation of $\gamma'$ is the total rotation of $\gamma$ plus the additional angle contributed by the new vertex. 

In the remaining case $p_i \ll \bar{p} \ll p_{i+1}$, we first suppose $1 \leq i \leq n-2$ and set
\begin{equation*}
\begin{aligned}
&\alpha_1:=\ma_{p_i}(p_{i-1},p_{i+1}),
&&\,
&&\alpha_2:=\ma_{p_{i+1}}(p_i,p_{i+2}), \\
&\beta_1:=\ma_{p_i}(p_{i-1},\bar{p}),
&&\beta_2:=\ma_{\bar{p}}(p_i,p_{i+1}),
&&\beta_3:=\ma_{p_{i+1}}(\bar{p},p_{i+2}),
\end{aligned}
\end{equation*}
cf.\ Figure \ref{fig: add vertex no curvature}. 
\begin{figure}
\begin{center}
\begin{tikzpicture}
\begin{scriptsize}
\coordinate [circle, fill=black, inner sep=0.5pt, label=270: {$p_{i-1}$}] (i-1) at (0,0);
\coordinate [circle, fill=black, inner sep=0.5pt, label=0: {$p_{i}$}] (i) at (0.5,1);
\coordinate [circle, fill=black, inner sep=0.5pt, label=180: {$\bar p$}] (bp) at (-.5,2.2);
\coordinate [circle, fill=black, inner sep=0.5pt, label=0: {$p_{i+1}$}] (i+1) at (1,3.5);
\coordinate [circle, fill=black, inner sep=0.5pt, label=90: {$p_{i+2}$}] (i+2) at (0.5,5);

\pic [draw, "${\beta_1}$", angle radius=8mm, angle eccentricity=1.2] {angle = bp--i--i-1};
\pic [draw, "${\beta_2}$", angle radius=5mm, angle eccentricity=0.6] {angle = i--bp--i+1};
\pic [draw, "${\beta_3}$", angle radius=10mm, angle eccentricity=1.2] {angle = i+2--i+1--bp};

\pic [draw, "${\omega_1}$", angle radius=6mm, angle eccentricity=1.3] {angle = i+1--i--bp};
\pic [draw, "${\omega_2}$", angle radius=7mm, angle eccentricity=1.3] {angle = bp--i+1--i};

\pic [draw, "${\alpha_1}$", angle radius=4mm, angle eccentricity=1.4] {angle = i+1--i--i-1};
\pic [draw, "${\alpha_2}$", angle radius=4mm, angle eccentricity=1.4] {angle = i+2--i+1--i};
\end{scriptsize}
\draw (i-1) -- (i) -- (i+1) -- (i+2);
\draw (i) -- (bp) -- (i+1);
\end{tikzpicture}
\end{center}
\caption{Adding a vertex does not decrease the curvature of the poly-segment.}
\label{fig: add vertex no curvature}
\end{figure}
Then
\[
\kappa^*(\gamma')-\kappa^*(\gamma)=\beta_1+\beta_2+\beta_3-\alpha_1-\alpha_2,
\]
and we now show that this quantity is nonnegative. Set
\[
\omega_1=\ma_{p_i}(\bar{p},p_{i+1}),\quad \omega_2=\ma_{p_{i+1}}(p_i,\bar{p}).
\]
By Definition \ref{def: angle and 4pt}(ii) applied to the timelike four-point configuration $p_{i-1} \ll p_i \ll \bp \ll p_{i+1}$, we have $\bar\alpha_1 \leq \bar\beta_1 + \bar\omega_1$, where these denote the respective comparison angles. 
Choosing appropriate sequences of parameters along the respective geodesics, this inequality is inherited in the limit, i.e., we have $\alpha_1\le \beta_1+\omega_1$. 
Similarly, we obtain $\alpha_2\le \beta_3+\omega_2$. 

Consider now the triangle $\Delta(p_i,\bar{p},p_{i+1})$. 
It has interior angles $\omega_1$, $\beta_2$, and $\omega_2$. Denote by $\beta_2^S$ the signed angle and by $\bar\beta_2^S$ the signed comparison angle, and use similar notation for $\omega_1$ and $\omega_2$. 
Then
\begin{equation*}
\label{eq:SumOfAnglesVertex}
\beta_1+\beta_2+\beta_3-\alpha_1-\alpha_2 \ge \beta_2-\omega_1-\omega_2 = \beta_2^S+\omega_1^S+\omega_2^S \ge \bar\beta_2^S+\bar\omega_1^S+\bar\omega_2^S = 0,
\end{equation*}
where the second inequality is due to angle comparison, cf.\ Definition \ref{def: angle and 4pt}(i), and the final equality follows from Lemma \ref{lem: angle sum minkowski}.
In the case $i=0$, i.e., if $p_0 \ll \bar{p} \ll p_1$, we have
\begin{equation*}
\kappa^*(\gamma')-\kappa^*(\gamma)= \beta + \gamma - \alpha,
\end{equation*}
where $\alpha = \ma_{p_1}(p_0,p_2), \beta = \ma_{\bar{p}}(p_0,p_1)$ and $\gamma = \ma_{p_1}(\bar{p},p_2)$. 
Introducing $\omega = \ma_{p_1}(p_0,\bar{p})$ and $\eta = \ma_{p_0}(\bar{p},p_1)$, by Definition \ref{def: angle and 4pt}(ii), we again have $\alpha \leq \omega + \gamma$ and so it remains to show that $\omega \leq \beta$. 
As above,
\begin{equation*}
\beta - \omega - \eta = \beta^S + \omega^S + \eta^S \geq \bar\beta^S + \bar\omega^S + \bar\eta^S = 0,
\end{equation*}
and thus, $\beta \geq \omega + \eta \geq \omega$. 
The case $i = n-1$ is treated similarly.
\end{proof}

\begin{corollary}[Total curvature as supremum]
Let $X$ be a strongly causal and regular \LpLS and let $\gamma$ be a timelike curve in a $(\leq 0)$-comparison neighbourhood. 
Then $ \mathrm {TC}(\gamma)=\sup_{\sigma\in \mathsf{\Pi}}\{ \kappa^*(\sigma)\}$, where $\mathsf{\Pi}$ is the collection of all timelike poly-segments inscribed in $\gamma$.
\end{corollary}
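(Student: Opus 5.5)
Writing $S:=\sup_{\sigma\in\mathsf{\Pi}}\kappa^*(\sigma)$, we show $\mathrm{TC}(\gamma)\le S$ and $S\le\mathrm{TC}(\gamma)$, the second using Lemma \ref{lem: TCpoly}.

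The bound $\mathrm{TC}(\gamma)\le S$ is immediate. For every $\eps>0$ we have $\Sigma_\eps(\gamma)\subseteq\mathsf{\Pi}$, so $\sup_{\sigma\in\Sigma_\eps(\gamma)}\kappa^*(\sigma)\le S$. Letting $\eps\to0^+$ in \eqref{deftc1} gives the claim.

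For $S\le\mathrm{TC}(\gamma)$, fix an inscribed poly-segment $\sigma\in\mathsf{\Pi}$ with partition $a=t_0<\dots<t_n=b$ and vertices $p_i=\gamma(t_i)$, and fix $\eps>0$. We refine the partition by adding finitely many points $s\in(a,b)$ one at a time until the mesh is below $\eps$. Since $\gamma$ is timelike and $s$ lies in some $(t_j,t_{j+1})$ of the current partition, we get $\gamma(t_j)\ll\gamma(s)\ll\gamma(t_{j+1})$. The new vertex $\bar p=\gamma(s)$ is thus in the middle case of Lemma \ref{lem: TCpoly}. We also need the new consecutive vertices to be joined by maximising geodesics inside the comparison neighbourhood $U$. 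This holds by Definition \ref{def: triangle comparison}(ii), since $\ell<D_0=+\infty$. By regularity these geodesics are timelike, so the resulting curve is again a timelike poly-segment inscribed in $\gamma$. Applying Lemma \ref{lem: TCpoly} at each step gives a chain $\sigma=\sigma_0,\sigma_1,\dots,\sigma_m=\sigma'$ with $\kappa^*(\sigma_{k+1})\ge\kappa^*(\sigma_k)$ and $\sigma'\in\Sigma_\eps(\gamma)$. Hence
\begin{equation*}
\kappa^*(\sigma)\le\kappa^*(\sigma')\le\sup_{\sigma''\in\Sigma_\eps(\gamma)}\kappa^*(\sigma'').
\end{equation*}
This holds for every $\eps>0$, so letting $\eps\to0^+$ gives $\kappa^*(\sigma)\le\mathrm{TC}(\gamma)$. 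Taking the supremum over $\sigma$ yields $S\le\mathrm{TC}(\gamma)$.

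The step needing the most care is checking that the refined curve is a legitimate poly-segment to which Lemma \ref{lem: TCpoly} applies. First, the segments between new consecutive vertices must exist and lie in $U$; this follows from property (ii) of the comparison neighbourhood together with regularity. Second, when geodesics are not unique, the angles must be well defined. In a $(\le0)$-comparison neighbourhood, \cite[Theorem 4.7]{BNR25} gives uniqueness of geodesics, which settles this. Everything else reduces to monotonicity under adding vertices.
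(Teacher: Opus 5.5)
Your proof is correct and is essentially the paper's argument. The inequality $\mathrm{TC}(\gamma)\le S$ is the trivial one, and the reverse inequality comes from refining an arbitrary inscribed poly-segment through repeated use of Lemma \ref{lem: TCpoly}. The differences are minor: the paper adjoins the vertices of a sequence $\sigma_i$ realising the $\limsup$, whereas you refine directly to mesh below $\eps$ and compare with $\sup_{\sigma''\in\Sigma_\eps(\gamma)}\kappa^*(\sigma'')$. You also state explicitly the existence, timelike character and uniqueness of the new segments, which the paper leaves implicit.
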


\begin{proof}
From the definition of $\mathrm{TC}(\gamma)$, it follows that there exists a sequence $\{\sigma_i\}_{i\in\mathbb{N}}$ of timelike poly-segments inscribed in $\gamma$, with $|\sigma_i|\to 0$, such that $\mathrm{TC}(\gamma)=\lim_{i\to+\infty} \kappa^*(\sigma_i).$ Certainly, for every $i\in \mathbb{N}$, $\kappa^*(\sigma_i)\leq \sup_{\sigma\in \mathsf{\Pi}} \kappa^*(\sigma),$ which immediately implies $\mathrm{TC}(\gamma)\leq \sup_{\sigma\in \mathsf{\Pi}} \kappa^*(\sigma).$

Conversely, let $P$ be an arbitrary timelike poly-segment inscribed in $\gamma$. 
Denote by $\sigma_i'$ the poly-segment whose set of vertices is the union of those of $P$ and $\sigma_i$. 
Then by Lemma \ref{lem: TCpoly}, $\kappa^*(P) \leq \kappa^*(\sigma_i')$. 
Taking the $\limsup$, we infer $\kappa^*(P) \leq \limsup_{i \rightarrow +\infty} \kappa^*(\sigma_i') \leq \TC(\gamma)$, and the claim follows by the arbitrariness of $P$. 
\end{proof}

We conclude this section by showing that the total rotation and the total curvature for poly-segments agree in spaces with an upper curvature bound.

\begin{proposition}[Total curvature and rotation of poly-segments]
\label{prop: curvatures agree}
Let $X$ be a strongly causal, regular \LpLS and let $\gamma$ be a poly-segment in a $(\leq K)$-comparison neighbourhood with vertices $p_0,\ldots,p_m$. 
Suppose that $\sup_{0\leq i<m} \ell(p_i,p_{i+1}) < D_K$. 
Then $\TC(\gamma)=\kappa^*(\gamma)$. 
\end{proposition}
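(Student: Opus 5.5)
We prove the two inequalities $\TC(\gamma)\geq\kappa^*(\gamma)$ and $\TC(\gamma)\leq\kappa^*(\gamma)$ separately, and compare an arbitrary fine inscribed poly-segment $\sigma$ with $\gamma$ segment by segment. Since $\gamma$ is a poly-segment, any inscribed $\sigma$ with small mesh-size has vertices $q_0,\dots,q_N$ lying on $\gamma$. For $|\sigma|$ small, every segment of $\sigma$ either lies inside a single geodesic piece $[p_{j},p_{j+1}]$ of $\gamma$, or its interior contains exactly one corner $p_j$ of $\gamma$. If a segment lies inside a single piece, then by uniqueness of geodesics in comparison neighbourhoods (\cite[Theorem 4.7]{BNR25}) and regularity it coincides with the corresponding subsegment of $\gamma$. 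Hence an interior vertex $q_k$ of $\sigma$ contributes angle $0$ unless one of its two adjacent segments "cuts a corner" of $\gamma$. (Angle $0$ between a segment and its continuation follows because the comparison angle of three collinear points is $0$.)

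Step 1 is localisation. Fix a corner $p_j$ with $1\leq j\leq m-1$. For fine $\sigma$ there are two consecutive vertices $x\ll p_j \ll y$ of $\sigma$ with $x\in[p_{j-1},p_j]$ and $y\in[p_j,p_{j+1}]$; these may sit at $p_j$ itself, in which case that segment contributes $\ma_{p_j}(p_{j-1},p_{j+1})$ directly. Otherwise the contribution of $\sigma$ near $p_j$ is $\ma_x(x^-,y)+\ma_y(x,y^+)$, where $x^-$ and $y^+$ lie on $\gamma$ before $x$ and after $y$. Because $x^-,x,p_j$ lie on one geodesic, the relevant angles reduce to angles of the small triangle $\Delta(x,p_j,y)$: the contribution is $\ma_x(p_j,y)+\ma_y(x,p_j)$, using that angles between a segment and the continuation of another segment add to the angle with the reversed direction, as in the proof of Lemma \ref{lem: TCpoly}. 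So
\[
\kappa^*(\sigma)=\sum_{j=1}^{m-1}\bigl(\ma_{x_j}(p_j,y_j)+\ma_{y_j}(x_j,p_j)\bigr),
\]
with the convention that a term equals $\ma_{p_j}(p_{j-1},p_{j+1})$ when the corner itself is a vertex.

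Step 2 compares the triangle $\Delta(x,p_j,y)$ with $\beta:=\ma_{p_j}(x,y)=\ma_{p_j}(p_{j-1},p_{j+1})$. Using signed angles as in Lemma \ref{lem: TCpoly}, the angle at $p_j$ is $+1$-signed and the angles at $x,y$ are $-1$-signed (time endpoints). Angle comparison, Definition \ref{def: angle and 4pt}(i), together with Lemma \ref{lem: angle sum minkowski}, gives
\[
\beta-\ma_x(p_j,y)-\ma_y(x,p_j)\geq -K\vol\bigl(\bar\Delta\bigr),
\]
so $\ma_x+\ma_y\leq\beta+K\vol(\bar\Delta)$. Here $\bar\Delta$ is the comparison triangle, which exists by the size bound $\ell(p_i,p_{i+1})<D_K$. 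As $|\sigma|\to0$ the sides shrink, so $\vol(\bar\Delta)\to 0$; this yields $\limsup\kappa^*(\sigma)\leq\kappa^*(\gamma)$. Note that this step needs no sign assumption on $K$.

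Step 3 gives the reverse inequality, which is immediate: choose inscribed $\sigma$ whose vertex set contains all corners $p_j$. Then $\sigma$ agrees with $\gamma$ on each piece, so $\kappa^*(\sigma)=\kappa^*(\gamma)$, and since such $\sigma$ exist with arbitrarily small mesh, $\TC(\gamma)\geq\kappa^*(\gamma)$. Combining the two inequalities gives $\TC(\gamma)=\kappa^*(\gamma)$.

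The main obstacle is Step 1: justifying rigorously the additivity of angles, namely that the angle at $x$ between $[x^-,x]$ and $[x,y]$ equals $\ma_x(p_j,y)$, where $[x^-,x]$ and $[x,p_j]$ lie on one geodesic. The plan is to use the four-point condition, Definition \ref{def: angle and 4pt}(ii), for subadditivity in one direction, together with the fact that the angle between $[x,x^-]$ reversed and $[x,p_j]$ is $0$. If only an inequality can be obtained, it suffices for Step 2 to have the upper bound $\ma_x(x^-,y)\leq \ma_x(x^-,p_j)+\ma_x(p_j,y)=\ma_x(p_j,y)$, which is exactly what the four-point condition provides.
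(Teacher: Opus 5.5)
Your proposal is correct and follows essentially the paper's argument. It localises at each corner $p_j$ using the adjacent vertices $x_j,y_j$ of $\sigma$, bounds the two angles of $\sigma$ there by the angles of $\Delta(x_j,p_j,y_j)$, and applies angle comparison and Gauss--Bonnet to get $\kappa^*(\sigma)\le\kappa^*(\gamma)+\sum_j K\vol(\bar\Delta_j)\to\kappa^*(\gamma)$; the reverse inequality comes from inserting the corners.

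There are only two small differences. First, the paper takes the one-sided bound $\ma_{x}(x^-,y)\le\ma_{x}(p_j,y)$ (and its time-dual at $y$) from \cite[Lemma 4.14]{BKR24}, whereas you derive it from the four-point condition. Second, your displayed identity for $\kappa^*(\sigma)$ in Step 1 should be stated as an inequality $\le$, since equality of those angles can fail in general (e.g.\ at branch points); as you note yourself, the inequality is all that is needed.
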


\begin{proof}
If $m=1$, i.e., if $\gamma$ is a geodesic, the statement follows immediately. 
Clearly, $\gamma$ is a poly-segment inscribed in itself. 
Let $\sigma_n$ be a sequence of poly-segments with $|\sigma_n| \to 0$ such that the set of vertices of $\sigma_n$ contains those of $\gamma$ for each $n$. 
As the angle only depends on the germs of geodesics, we infer $\kappa^*(\gamma) \leq \kappa^*(\sigma_n)$. 
Taking the $\limsup$, we have $\kappa^*(\gamma) \leq \limsup_{n \to \infty}\kappa^*(\sigma_n) \leq \TC(\gamma)$. 

Conversely, given $\delta >0$, let $\sigma$ be any poly-segment inscribed in $\gamma$ with $|\sigma|<\delta$. 
Set $p_i=\gamma(t_i)$. 
By shrinking $\delta$ if necessary, suppose that $\sigma$ has at least three break-points on each segment $[p_i,p_{i+1}]$. 
For each $1 \leq i < m$, let $x_i$ denote the vertex of $\sigma$ associated to the largest parameter not greater than $t_i$, and $y_i$ the vertex of $\sigma$ associated to the smallest parameter larger than $t_i$. 
That is, if $x_i=\gamma(r_i)$ and $y_i=\gamma(s_i)$, we have $r_i \leq t_i < s_i$. 
All other angles between break-points of $\sigma$ are along a single segment, and hence vanish. 
Set $\omega_i=\ma_{p_i}(p_{i-1},p_{i+1})$, $\theta_{i,+}=\ma_{y_i}(x_i,p_{i+1})$, and $\theta_{i,-}=\ma_{x_i}(p_{i-1},y_i)$.
We further denote $\alpha_{i,-}=\ma_{x_i}(p_i,y_i)$ and $\alpha_{i,+}=\ma_{y_i}(x_i,p_i)$. 
Then by \cite[Lemma 4.14]{BKR24}, we have $\theta_{i,\pm} \leq \alpha_{i,\pm}$. 
Now $\alpha_{i,-}, \omega_i$ and $\alpha_{i,+}$ are precisely the angles of the timelike triangle $T_i=\Delta(x_i,p_i,y_i)$. 
Denote its comparison triangle by $\bar T_i$ and the respective comparison angles by $\bar \alpha_{i,-}$, $\bar \omega_i$ and $\bar \alpha_{i,+}$. 
The corresponding signed angles are decorated with an upper index $S$.
Then we get:
\begin{align*}
\kappa^*(\sigma) - \kappa^*(\gamma) 
& = \sum_{i=1}^{m-1} \theta_{i,+} + \,\theta_{i,-} - \omega_i 
\leq  \sum_{i=1}^{m-1} \alpha_{i,+} + \,\alpha_{i,-} - \omega_i 
= \sum_{i=1}^{m-1} -\alpha_{i,+}^{S} - \,\alpha_{i,-}^{S} - \omega_i^S \\
& \leq \sum_{i=1}^{m-1} -\bar\alpha_{i,+}^{S} - \,\bar\alpha_{i,-}^{S} -\bar\omega_i^S = \sum_{i=1}^{m-1}K\vol(\bar T_i), 
\end{align*}
where the second inequality is due to angle comparison, cf.\ Definition \ref{def: angle and 4pt}(i), and the last equality is due to Lemma \ref{lem: angle sum minkowski}. 
Clearly, $\vol(\bar T_i)$ is bounded from above by the volume of a timelike diamond $I(p,q)$ in $\lm{K}$ with $\ell(p,q)=\delta$. 
In particular, it goes to zero as $\delta \to 0$. 
Finally, applying $\limsup_{\delta \to 0}$ on both sides of $\kappa^*(\sigma)  \leq \kappa^*(\gamma) + \sum_{i=1}^{m-1} K\vol(\bar T_i)$, we obtain
\begin{equation*}
\TC(\gamma) \leq \kappa^*(\gamma). \qedhere
\end{equation*}
\end{proof}

\section{Length estimates for timelike curves} 
The goal for this section is to prove our main theorem and to show that timelike curves of finite total curvature are rectifiable, a result of independent interest.
To do so, we start out by providing explicit formulae for the minimal curvature of bi-segments in the respective model spaces and show that this minimum is attained precisely by (geodesics and) isosceles bi-segments.

\begin{lemma}[Length of bi-segments and rigidity]
\label{lem: bi-segment and rigidity}
Let $p\ll q\ll w$ be three timelike related points in $\lm{K}$. 
Let $\kappa=\ma_{q}(p,w)$ be the total curvature of the corresponding bi-segment, and let $r=\ell(p,w)<D_K$. 
Assume that $\ell(p,q)+\ell(q,w)=l_0$ is fixed. 
Then
\begin{equation*}
\label{total bound}
\kappa\geq \bm{\kappa}(K,r,l_0):=
\begin{cases}
2\arcosh\left(\dfrac{\sin(\sqrt{K}r/2)}{\sin(\sqrt{K}l_0/2)}\right), 
& \mathrm{if}\,\,K>0,\\[1.2ex]
2\arcosh\left(\dfrac{r}{l_0}\right), 
& \mathrm{if}\,\,K=0,\\[1.2ex]
2\arcosh\left(\dfrac{\sinh(\sqrt{-K}r/2)}{\sinh(\sqrt{-K}l_0/2)}\right), 
& \mathrm{if}\,\,K<0.
\end{cases}
\end{equation*}
Equality holds if and only if $\ell(p,q)=\ell(q,w)=l_0/2$ when $\kappa>0$, and if and only if $p,q,w$ lie on a geodesic in the case $\kappa=0$. Moreover, if $\kappa_0\in[0,+\infty)$ denotes the total curvature of the corresponding isosceles bi-segment, then its length is given by
\begin{equation}
\label{length bound}
\mathcal{L}(K,r,\kappa_0):=
\begin{cases}
\dfrac{2}{\sqrt{K}}\arcsin\left(\dfrac{\sin(\sqrt{K}r/2)}{\cosh(\kappa_0/2)}\right), & \mathrm{if}\,\,K>0,\\[1.2ex]
\dfrac{r}{\cosh(\kappa_0/2)}, & \mathrm{if}\,\,K=0,\\[1.2ex]
\dfrac{2}{\sqrt{-K}}\operatorname{arsinh}\left(\dfrac{\sinh(\sqrt{-K}r/2)}{\cosh(\kappa_0/2)}\right), & \mathrm{if}\,\,K<0.
\end{cases}
\end{equation}
\end{lemma}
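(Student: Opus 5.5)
The plan is to reduce everything to the Law of Cosines (Proposition \ref{prop: law of cosines monotonicity}) in the triangle $\Delta(p,q,w)$ and then minimise over the split of $l_0$. Write $a=\ell(p,q)$, $b=\ell(q,w)$ with $a+b=l_0$, $a,b>0$. The angle $\kappa=\ma_q(p,w)$ sits at the middle vertex, so its sign is $\epsilon=+1$. I will treat $K=0$ first. There the Law of Cosines gives
\begin{equation*}
\cosh\kappa=\frac{r^2-a^2-b^2}{2ab}.
\end{equation*}
Using the half-angle identity $2\cosh^2(\kappa/2)=1+\cosh\kappa$, this becomes
\begin{equation*}
\cosh^2(\kappa/2)=\frac{r^2-(a-b)^2}{4ab}=\frac{r^2-(a-b)^2}{l_0^2-(a-b)^2}.
\end{equation*}
Since $r\geq l_0$ by the reverse triangle inequality, the map $x\mapsto (r^2-x)/(l_0^2-x)$ is nondecreasing in $x=(a-b)^2\in[0,l_0^2)$. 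It is strictly increasing when $r>l_0$. Hence $\cosh(\kappa/2)\geq r/l_0$, with equality if and only if $a=b$ (when $r>l_0$, equivalently $\kappa>0$). When $r=l_0$ we get $\kappa=0$ for every split, and equality in the reverse triangle inequality in $\lm 0$ forces $p,q,w$ to be collinear on a geodesic.

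For $K\neq0$, set $s=\sqrt{|K|}$ and take $K<0$. The Law of Cosines gives
\begin{equation*}
\cosh(sr)=\cosh(sa)\cosh(sb)+\cosh\kappa\,\sinh(sa)\sinh(sb).
\end{equation*}
I will rewrite this with the half-angle substitution $\cosh\kappa=2\cosh^2(\kappa/2)-1$ and the identities
\begin{equation*}
\cosh(sa)\cosh(sb)-\sinh(sa)\sinh(sb)=\cosh(s(a-b)),\qquad 2\sinh(sa)\sinh(sb)=\cosh(sl_0)-\cosh(s(a-b)).
\end{equation*}
This yields
\begin{equation*}
\cosh^2(\kappa/2)=\frac{\cosh(sr)-\cosh(s(a-b))}{\cosh(sl_0)-\cosh(s(a-b))}.
\end{equation*}
With $y=\cosh(s(a-b))\in[1,\cosh(sl_0))$, this is the same monotone fractional-linear function $y\mapsto(\cosh(sr)-y)/(\cosh(sl_0)-y)$, and $\cosh(sr)\geq\cosh(sl_0)$. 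The minimum is therefore at $a=b$, giving
\begin{equation*}
\cosh^2(\kappa/2)\geq\frac{\cosh(sr)-1}{\cosh(sl_0)-1}=\frac{\sinh^2(sr/2)}{\sinh^2(sl_0/2)}.
\end{equation*}
The case $K>0$ is identical with trigonometric functions. There the function $y=\cos(s(a-b))$ lies in $(\cos(sl_0),1]$ and $\cos(sr)\leq\cos(sl_0)$, so the quotient $(y-\cos(sr))/(y-\cos(sl_0))$ is again minimised at $y=1$. This produces the $\sin$ formula. The size bound $r<D_K$ keeps $sr/2<\pi/2$, so all $\sin$ values are positive and the quotient makes sense.

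The rigidity statement comes out of strict monotonicity, which holds exactly when $r>l_0$. By the computation above this is equivalent to $\kappa>0$, and then equality forces $a=b=l_0/2$. When $\kappa=0$ the triangle is degenerate, so the points lie on one geodesic; conversely, collinear points give $\kappa=0=\bm\kappa$. Finally, the length formula \eqref{length bound} follows by inverting the equality case. For an isosceles bi-segment with curvature $\kappa_0$ we have $\sinh(sl_0/2)=\sinh(sr/2)/\cosh(\kappa_0/2)$, and analogously with $\sin$ for $K>0$ and $l_0=r/\cosh(\kappa_0/2)$ for $K=0$. Solving for $l_0$ gives $\mathcal L(K,r,\kappa_0)$.

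I expect the main obstacle to be organising the trigonometric identities so that the dependence on the split collapses to a single monotone variable $(a-b)$. Once that is in place, the only remaining care is the sign bookkeeping for $K>0$, together with the strictness argument needed for the equality cases.
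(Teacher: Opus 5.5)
Your proposal is correct and follows essentially the same route as the paper: apply the Law of Cosines at the middle vertex, then reduce the dependence on the split $a+b=l_0$ to a single monotone variable in $a-b$ and read off the minimum at $a=b$. The paper uses $x=(a-b)/2$, and $y=\sinh^2(\lambda x)$ for $K<0$, which is affinely related to your $\cosh(s(a-b))=1+2\sinh^2(\lambda x)$. Your half-angle rewriting of $\cosh^2(\kappa/2)$ as a fractional-linear function is a slightly cleaner packaging: it gives the closed form $2\arcosh(\cdot)$ directly and makes explicit the case $K>0$, which the paper only calls analogous.
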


\begin{proof}
We treat only the cases $K=0$ and $K<0$, since the case $K>0$ is analogous to the case $K<0$. We set
\[
u:=\ell(p,q)=m+x,\qquad v:=\ell(q,w)=m-x,\qquad m:=\frac{l_0}{2},\qquad -m<x<m.
\]

\medskip
\noindent
\textbf{Case 1: $K=0$.}
The Law of Cosines gives
\[
\cosh \kappa
=
\frac{r^2-2m^2-2x^2}{2(m^2-x^2)},
\]
which implies
\[
\frac{\mathrm{d}}{\mathrm{d}x}\cosh \kappa
=\frac{x(r^2-4m^2)}{(m^2-x^2)^2}.
\]
This vanishes either if $r=l_0=2m$, in which case $p,q$ and $w$ lie on a geodesic and $\kappa=0$, or if $r > l_0$ and $x=0$, in which case 
\[
\bm{\kappa}(K,r,l_0)=\arcosh\!\left(\frac{2r^2-l_0^2}{l_0^2}\right)=2\arcosh\!\left(\frac{r}{l_0}\right).
\]

\medskip
\noindent
\textbf{Case 2: $K<0$.}
Set $\lambda:=\sqrt{-K}$.
In the non-trivial case, in which the three points do not lie on a geodesic, we have $r>l_0=2m$.
By the Law of Cosines,
\begin{equation*}
\cosh\kappa = \frac{\cosh(\lambda r)-\cosh(\lambda u)\cosh(\lambda v)}{\sinh(\lambda u)\sinh(\lambda v)}.
\end{equation*}
Using $u+v=2m$ and $u-v=2x$, this becomes
\begin{equation*}
\cosh\kappa = \frac{\cosh(\lambda r)-\cosh^2(\lambda m)-\sinh^2(\lambda x)}{\sinh^2(\lambda m)-\sinh^2(\lambda x)}.
\end{equation*}
Now set
\begin{equation*}
y(x):=\sinh^2(\lambda x).
\end{equation*}
Then
\begin{equation*}
\cosh\kappa = \frac{\cosh(\lambda r)-\cosh^2(\lambda m)-y}{\sinh^2(\lambda m)-y}.
\end{equation*}
Differentiating with respect to $y$ gives
\begin{equation*}
\frac{\mathrm{d}}{\mathrm{d}y}\cosh\kappa = \frac{\cosh(\lambda r)-\cosh(2\lambda m)}{(\sinh^2(\lambda m)-y)^2} >0,
\end{equation*}
because $r>2m$. Hence $\cosh\kappa$, and therefore $\kappa$, is minimised exactly when $y(x)$ is minimised. Since
\begin{equation*}
y(x)=\sinh^2(\lambda x)
\end{equation*}
has a unique minimum at $x=0$, the minimum occurs exactly when
\begin{equation*}
u=v=m=\frac{l_0}{2}.
\end{equation*}
Substituting $x=0$ gives
\begin{equation*}
\begin{aligned}
\bm{\kappa}(K,r,l_0) = \arcosh{\left(\frac{\cosh(\sqrt{-K}r)-\cosh^2(\sqrt{-K}l_0/2)}{\sinh^2(\sqrt{-K}l_0/2)}\right)}= 2\arcosh{\left(\frac{\sinh(\sqrt{-K}r/2)}{\sinh(\sqrt{-K}l_0/2)}\right)}.
\end{aligned}
\end{equation*}
\noindent
The second assertion follows by a direct computation via the Law of Cosines.
\end{proof}

\begin{lemma}[Eliminating break-points reduces total curvature]
\label{deformation2}
Let $K \in \R$ and let $p_1 \ll p_2 \ll p_3 \ll p_4$ form a timelike poly-segment $\sigma$ in $\lm{K}$ such that $\ell(p_1,p_2) + \ell(p_2,p_3) < D_K/2$. 
Assume that $p_1$ and $p_4$ lie in the same closed half-space determined by the geodesic extending $[p_2,p_3]$.
Then $\sigma$ can be deformed at the vertex $p_2$ (fixing all other vertices) without changing its total length in such a way that $\ell(p_1,p_2)$ increases until the bi-segment formed by $[p_2,p_3]$ and $[p_3,p_4]$ becomes a geodesic, and such that the total curvature does not increase in the process.
Moreover, if $\ma_{p_2}(p_1,p_3)>0$, this deformation strictly decreases the total curvature.
\end{lemma}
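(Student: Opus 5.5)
We move $p_2$ along the level set $\{x : \ell(p_1,x)+\ell(x,p_3)=c\}$, where $c:=\ell(p_1,p_2)+\ell(p_2,p_3)$ is kept constant while $p_1,p_3,p_4$ stay fixed. This set is a branch of a "Lorentzian ellipse" with foci $p_1,p_3$. The total length $c+\ell(p_3,p_4)$ is then automatically preserved. The total curvature is
\[
\kappa^*(\sigma)=\ma_{p_2}(p_1,p_3)+\ma_{p_3}(p_2,p_4),
\]
and we track both terms as we parametrise the motion by $u:=\ell(p_1,p_2)\in[u_0,c)$.

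\textbf{Controlling the angle at $p_2$.} Consider the triangle $\Delta(p_1,p_2,p_3)$. Its longest side $\ell(p_1,p_3)$ is fixed, and the short sides are $u$ and $c-u$. By the computation in the proof of Lemma~\ref{lem: bi-segment and rigidity}, $\cosh\ma_{p_2}(p_1,p_3)$ is a strictly increasing function of $y=\sinh^2(\lambda x)$ with $x=u-c/2$ (or of $x^2$ when $K=0$), unless the angle is $0$.

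\textbf{Choosing the side of the ellipse.} We must choose which of the two reflected positions of $p_2$ on the ellipse is used, namely the side of $[p_1,p_3]$ on which $p_2$ sits. We keep $p_2$ on the side such that, as $u$ increases, the direction $[p_3,p_2]$ swings towards the extension of $[p_3,p_4]$ backwards. The half-space hypothesis on $p_1$ and $p_4$ guarantees that the extension of $[p_4,p_3]$ beyond $p_3$ meets this ellipse branch at a point $p_2^*$ with $u^*>u_0$. That point is the terminal configuration, where $p_2,p_3,p_4$ are collinear and the bi-segment $[p_2,p_3]\cup[p_3,p_4]$ is a geodesic. Note that $u$ increases towards $p_1$-side domination.

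\textbf{Comparing the two angles.} Write $\phi(u):=\ma_{p_3}(p_1,p_2)$ for the angle at $p_3$ inside the triangle, which is signed via the time-endpoint convention. Then, by the half-space assumption, the angle at $p_3$ for the poly-segment equals $|\psi-\phi(u)|$, where $\psi=\ma_{p_3}(p_1,p_4)$ is fixed. Along the motion $\phi(u)$ moves monotonically to $\psi$. The key estimate is therefore
\[
\frac{d}{du}\ma_{p_2}(p_1,p_3)\le\Bigl|\frac{d}{du}\phi(u)\Bigr|.
\]
With equality, this would be a statement about the isosceles position, which fails in general. Instead we use Gauss--Bonnet, Lemma~\ref{lem: angle sum minkowski}, on the triangle $\Delta(p_1,p_2,p_3)$:
\[
\ma_{p_2}=\ma_{p_1}+\ma_{p_3}\pm K\vol.
\]
Substituting, the total curvature equals $\ma_{p_1}(p_2,p_3)+\psi$ plus a volume term. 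The angle at $p_1$ decreases as $p_2$ moves away from $p_1$ along the ellipse, because $u$ increases, and this follows from the monotonicity remark after Proposition~\ref{prop: law of cosines monotonicity}. The volume term is handled using the size bound $c<D_K/2$, which keeps everything in the regime where the law of cosines is monotone. For $K=0$ this already gives the claim.

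\textbf{Main obstacle.} The hardest step is this monotonicity: showing that $\ma_{p_1}(p_2,p_3)$ is nonincreasing in $u$ for $K\ne0$ while the volume correction has the right sign. We would attack it directly via the Law of Cosines, differentiating in $u$ with $\ell(p_1,p_3)$ and $c$ fixed. This reduces to a one-variable sign check using $\lambda c<\pi/2$ when $K>0$.

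\textbf{Strictness.} If $\ma_{p_2}(p_1,p_3)>0$, the triangle is nondegenerate. The Law of Cosines derivative is then nonzero, so the total curvature strictly decreases.
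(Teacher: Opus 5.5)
Your setup is right. With $\psi=\ma_{p_3}(p_1,p_4)$ fixed and the half-space hypothesis in force, the total curvature along the motion is $\ma_{p_2}(p_1,p_3)+\psi-\phi(u)$. The substance of the lemma is therefore that $\ma_{p_2}(p_1,p_3)-\phi(u)$ is strictly decreasing in $u$ whenever the angle at $p_2$ is positive. That is exactly your ``key estimate'', which you then set aside, and it is what the paper proves directly. It writes $\ell(p_1,p_2)=m+x$ and $\ell(p_2,p_3)=m-x$, and differentiates both angles via the Law of Cosines. For $K<0$ the derivative is a positive factor times $\sinh(2\lambda x)/\sinh(\lambda(m+x))-1$ with $\lambda=\sqrt{-K}$, which is negative because $m+x>2x$. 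For $K>0$ the sine analogue is negative because the size bound gives $2\sqrt K\, m<\pi/2$. Your proposal never performs this computation for $K\neq0$: it is deferred to the ``main obstacle'' paragraph, and your strictness claim rests on it as well.

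The Gauss--Bonnet detour does not remove the difficulty, and your plan as stated would fail. With $\delta=\ma_{p_1}(p_2,p_3)$ you get $\ma_{p_2}(p_1,p_3)-\phi=\delta-K\vol(\Delta(p_1,p_2,p_3))$. You then propose to show separately that $\delta$ decreases and that the volume term ``has the right sign''. The second part is false in general. The time-reversing reflection of $\lm K$ exchanging $p_1$ and $p_3$ preserves your level set and swaps $u$ with $c-u$. Hence the area is an even function of $x=u-c/2$, and its derivative is odd. In the flat picture, with $b=\ell(p_1,p_3)$, the area is $\tfrac14\sqrt{(b^2-c^2)(b^2-4x^2)}$, which is maximal at the isosceles position. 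The deformation may run across $x=0$, so $-K\vol$ increases on part of the path for either sign of $K$. You therefore need a combined estimate of $\delta-K\vol$, which is the direct computation above. The bound $c<D_K/2$ cannot substitute for it, since it is vacuous for $K<0$.

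For $K=0$ your route does work, and it is arguably neater than the paper's, since the total curvature is simply $\psi+\delta$. But it does not work for the reason you give. The monotonicity remark varies one side at a time, whereas here one short side grows while the other shrinks, and these have opposite effects on $\delta$. What settles it is the explicit formula $\cosh\delta=\frac{b^2-c^2}{2bu}+\frac{c}{b}$. This is strictly decreasing in $u$ because $b>c$ whenever $\ma_{p_2}(p_1,p_3)>0$.
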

\begin{figure}[htbp]
\begin{center}
\begin{tikzpicture}

\draw [] (0,0)-- (0,4);
\draw [] (0,4)-- (1,5.5);
\draw [dashed] (0,4)-- (-2,1);
\draw [] (0,0)-- (-0.6954195030509164,2.056668223940556);
\draw [] (-0.6954195030509164,2.056668223940556)-- (0,4);

\begin{scriptsize}
\coordinate [circle, fill=black, inner sep=0.5pt, label=270: {$p_1$}] (p1) at (0,0);
\coordinate [circle, fill=black, inner sep=0.5pt, label=225: {$p_2$}] (p2) at (-0.6954195030509164,2.056668223940556);
\coordinate [circle, fill=black, inner sep=0.5pt, label=315: {$p_3$}] (p3) at (0,4);
\coordinate [circle, fill=black, inner sep=0.5pt, label=225: {$p_2'$}] (p2') at (-0.6574152550914241,2.613724944413358);
\coordinate [circle, fill=black, inner sep=0.5pt, label=135: {$p_2''$}] (p2'') at (-0.5356970351018242,3.196454447347264);
\coordinate [circle, fill=black, inner sep=0.5pt, label=135: {$p_4$}] (p4) at (1,5.5);
\end{scriptsize}

\draw[dotted] (p1) -- (p2') -- (p3);
\draw[dotted] (p1) -- (p2'') -- (p3);

\end{tikzpicture}
\end{center}
\caption{The point $p_2$ moving along the deformation, reaching its final position at $p_2''$ on the extension of the geodesic segment $[p_3,p_4]$.}
\label{fig: deformation}
\end{figure}

\begin{proof}
Set
\begin{equation*}
\begin{aligned}
&\alpha:=\ma_{p_3}(p_1,p_4),
&&\beta:=\ma_{p_2}(p_1,p_3),
&&\gamma:=\ma_{p_3}(p_1,p_2),
&&\eta\;:=\ma_{p_3}(p_2,p_4),\\
&a\,:=\ell(p_2,p_3), &&b\,:=\ell(p_1,p_3), &&c\,:=\ell(p_1,p_2),&&m:=\tfrac{1}{2}(a+c).
\end{aligned}
\end{equation*}
If $\beta = 0$, it follows that $p_2 \in [p_1,p_3]$. 
This forces the deformation to simply move $p_2$ up along the segment $[p_1,p_3]$ until it coincides with $p_3$. 
In particular, the total curvature remains constant throughout this deformation. 
Hence, it suffices to consider the non-trivial case $\beta > 0$, for which $b > a+c = 2m$.

We know that the deformation changes the total curvature by varying the sum $\beta+\eta$. 
Since the points $p_1$ and $p_4$ lie in the same closed half-space determined by the geodesic extending $[p_2,p_3]$, and since the elementary relation $\eta=\alpha-\gamma$ holds, with $\alpha$ fixed, it suffices to study the monotonicity of $\beta-\gamma$. 

\medskip
\noindent
\textbf{Case 1: $K=0$.}
We introduce a parameter $x$ such that
\begin{equation*}
c=m+x,\qquad a=m-x,\qquad -m< x\leq x_{\mathrm{max}}< m,
\end{equation*}
where $x_{\max}$ is attained when $[p_2,p_3]$ and $[p_3,p_4]$ become geodesics. Our claim is that
\begin{equation*}
x \mapsto \beta(x) - \gamma(x)
\end{equation*}
is strictly decreasing in the interval $(-m, x_{\textnormal{max}}]$ provided $\beta >0$, in which case $b>2m>2x$.
By the Law of Cosines, we have
\begin{equation*}
\cosh\beta(x) = \frac{b^2-2m^2-2x^2}{2(m^2-x^2)},\qquad \cosh\gamma(x) = \frac{b^2-4mx}{2b(m-x)}.
\end{equation*}
Elementary computations then give 
\begin{equation*}
\begin{aligned}
\beta'(x)&=\frac{\dfrac{\mathrm{d}}{\mathrm{d}x}\cosh\beta(x)}{\sinh\beta(x)}=
\frac{2x}{(m+x)(m-x)}\sqrt{\frac{b^2-4m^2}{b^2-4x^2}},\\
\gamma'(x)&=\frac{\dfrac{\mathrm{d}}{\mathrm{d}x}\cosh\gamma(x)}{\sinh\gamma(x)}=\frac{1}{m-x}\sqrt{\frac{b^2-4m^2}{b^2-4x^2}}.
\end{aligned}
\end{equation*}
Therefore,
\[
\beta'(x)-\gamma'(x)
=-\sqrt{\frac{b^2-4m^2}{b^2-4x^2}}
\frac{1}{m+x}< 0.
\]
Hence, $\beta(x) - \gamma(x)$ is indeed strictly decreasing in $x\in (-m,x_{\mathrm{max}}]$.

\medskip
\noindent
\textbf{Case 2: $K<0$.} Set $\lambda:=\sqrt{-K}$. As above, write
\begin{equation*}
c=m+x,\qquad a=m-x,\qquad -m<x\le x_{\max}<m.
\end{equation*}
In the non-trivial case, we have $b>2m$. We again compute $(\beta-\gamma)'(x)$ and argue that it is strictly negative. By the Law of Cosines,
\begin{equation*}
\cosh\beta = \frac{\cosh(\lambda b)-\cosh(\lambda a)\cosh(\lambda c)}
{\sinh(\lambda a)\sinh(\lambda c)}, 
\qquad
\cosh\gamma = \frac{\cosh(\lambda a)\cosh(\lambda b)-\cosh(\lambda c)}
{\sinh(\lambda a)\sinh(\lambda b)}.    
\end{equation*}
We set
\begin{equation*}
q(x):=\sqrt{\frac{\sinh\left(\frac{\lambda b+2\lambda m}{2}\right)\sinh\left(\frac{\lambda b-2\lambda m}{2}\right)}{\sinh\left(\frac{\lambda b+2\lambda x}{2}\right)\sinh\left(\frac{\lambda b-2\lambda x}{2}\right)}} > 0.
\end{equation*}
A computation analogous to the one above then gives
\begin{equation*}
\beta'(x)=\lambda q(x)
\frac{\sinh(2\lambda x)}
{\sinh(\lambda c)}\frac{1}{\sinh(\lambda a)},
\qquad
\gamma'(x) = \lambda q(x) \frac{1}{\sinh(\lambda a)}.
\end{equation*}
Hence,
\begin{equation*}
(\beta-\gamma)'(x)=\frac{\lambda q(x)}{\sinh(\lambda a)}\left(\frac{\sinh(2\lambda x)}{\sinh(\lambda c)}-1\right).
\end{equation*}
Since $c=m+x>2x$, we have
\begin{equation*}
\sinh(\lambda c)>\sinh(2\lambda x),
\end{equation*}
and thus
\begin{equation*}
(\beta-\gamma)'(x)<0,
\end{equation*}
implying that $\beta-\gamma$ is strictly decreasing throughout the deformation.

\medskip
\noindent
\textbf{Case 3: $K>0$.} Using notation similar to that in \textbf{Case 2}. We set
\begin{equation*}
\tilde{q}(x):=
\sqrt{\frac{\sin\left(\frac{\tilde{\lambda} b+2\tilde{\lambda} m}{2}\right)
\sin\left(\frac{\tilde{\lambda} b-2\tilde{\lambda} m}{2}\right)}{\sin\left(\frac{\tilde{\lambda} b+2\tilde{\lambda} x}{2}\right)
\sin\left(\frac{\tilde{\lambda} b-2\tilde{\lambda} x}{2}\right)}}>0,\quad \mathrm{where}\,\, \tilde{\lambda}:=\sqrt{K}.
\end{equation*}
Since $\ell(p_1,p_2) + \ell(p_2,p_3) < \frac{D_K}{2}$, a similar computation ultimately yields
\begin{equation}
\label{eq:K-positive-beta-gamma}
(\beta-\gamma)'(x)=
\frac{\tilde{\lambda} \tilde{q}(x)}{\sin(\tilde{\lambda} a)}
\left(\frac{\sin(2\tilde{\lambda} x)}{\sin(\tilde{\lambda} c)}-1 \right),
\end{equation}
from which the claim readily follows. 
\end{proof}

A time-reversed analogue, where $p_3$ moves while increasing $\ell(p_3,p_4)$ until $[p_1,p_2]$ and $[p_2,p_3]$ become a geodesic, clearly holds by the same arguments.
In the proof of Theorem \ref{main theorem}, we will iteratively apply the previous result to poly-segments in the respective model spaces.
This, however, will cause technical difficulties in the positive curvature case as \eqref{eq:K-positive-beta-gamma} fails to hold for distances larger than $\frac{D_K}{2}$.
To avoid overloading the proof of our main theorem, we prepare the following lemma:

\begin{lemma}[Direct estimate for $K > 0$]
\label{lem:positive_tri_segment_estimate}
Let $K>0$ and let $p_0\ll p_1\ll p_2\ll p_3$ be a convex timelike poly-segment consisting of three segments in $\lm{K}$ with total length $l$ and $ r:=\ell(p_0,p_3) < D_K$.\footnote{Here, convex means that the quadrilateral generated by these four vertices is convex.} Assume $\theta=\ma_{p_1}(p_0,p_2)>0,
\varphi=\ma_{p_2}(p_1,p_3)>0,$ and the total curvature $\Theta=\theta+\varphi$.
Then there exists an isosceles timelike bi-segment $B$ in $\lm{K}$ with the same endpoints
$p_0$ and $p_3$, total length $l$, and
\[
\kappa^*(B)< \Theta .
\]
\end{lemma}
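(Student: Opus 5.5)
The plan is to avoid the deformation of Lemma~\ref{deformation2}, which is unavailable here because sides may exceed $D_K/2$. Instead I would produce \emph{some} bi-segment with endpoints $p_0,p_3$ and total curvature $<\Theta$, and then invoke the extremality in Lemma~\ref{lem: bi-segment and rigidity}. That lemma says that among bi-segments in $\lm K$ with endpoints $p_0,p_3$ and fixed total length $l_0$, the isosceles one has the least curvature, namely $\bm{\kappa}(K,r,l_0)=2\arcosh\bigl(\sin(\sqrt K r/2)/\sin(\sqrt K l_0/2)\bigr)$. Note that $0<l_0\le r<D_K$ gives $\sqrt K l_0/2<\pi/2$. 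On that range $\sin$ is increasing, so $l_0\mapsto\bm{\kappa}(K,r,l_0)$ is non-increasing. Hence it suffices to find a bi-segment $p_0\ll q\ll p_3$ with length $l'\le l$ and angle $\omega:=\ma_q(p_0,p_3)<\Theta$. Then, with $B$ the isosceles bi-segment of length $l$,
\[
\kappa^*(B)=\bm{\kappa}(K,r,l)\le\bm{\kappa}(K,r,l')\le\omega<\Theta .
\]

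\textbf{Construction of $q$.} Extend the geodesic through $p_0,p_1$ to the future beyond $p_1$. Extend the geodesic through $p_3,p_2$ to the past beyond $p_2$. By convexity of the quadrilateral $p_0p_1p_2p_3$, together with $\theta,\varphi>0$, these two timelike geodesics lie on the same side of $[p_1,p_2]$ and converge towards each other. I expect them to meet at a point $q$ with $p_1\ll q\ll p_2$, so that the triangle $\Delta(p_1,q,p_2)$ sits outside the quadrilateral, adjacent to the side $[p_1,p_2]$. Then the bi-segment $p_0\to q\to p_3$ consists of the segments $[p_0,q]\supset[p_0,p_1]$ and $[q,p_3]\supset[p_2,p_3]$. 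Its length is
\[
l'=\ell(p_0,p_1)+\ell(p_1,q)+\ell(q,p_2)+\ell(p_2,p_3)\le l,
\]
by the reverse triangle inequality $\ell(p_1,q)+\ell(q,p_2)\le\ell(p_1,p_2)$. Both $[p_0,q]$ and $[q,p_3]$ remain maximising, since their lengths are below $r<D_K$.

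\textbf{Angle estimate.} Apply Lemma~\ref{lem: angle sum minkowski} to $\Delta(p_1,q,p_2)$. The interior angle at $p_1$, between $[p_1,q]$ and $[p_1,p_2]$, is exactly $\theta$, because $[p_1,q]$ continues $[p_0,p_1]$. Likewise the interior angle at $p_2$ is $\varphi$. The interior angle at $q$ is $\omega$. Now $p_1$ and $p_2$ are the time-endpoints of this triangle and $q$ is the middle vertex, so the signs are $-1,-1,+1$. Gauss--Bonnet therefore gives
\[
\omega-\theta-\varphi=-K\vol\bigl(\Delta(p_1,q,p_2)\bigr)<0,
\]
since $K>0$ and the triangle is non-degenerate (because $\theta,\varphi>0$). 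Hence $\omega<\Theta$, which together with the first paragraph yields $\kappa^*(B)<\Theta$.

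\textbf{Main obstacle.} The delicate step is the existence of $q$ with the correct causal order $p_1\ll q\ll p_2$ inside the region where $\lm K$ behaves well. If instead the extensions diverge, or meet only beyond the range $D_K$, the construction fails. I would first verify this in a model chart, using convexity of the quadrilateral: the angles at $p_1$ and $p_2$ "bend" the poly-segment in the same direction. If the intersection could fail, I would fall back on a limiting version. There, one moves $p_1$ along $[p_0,p_1]$-extension and $p_2$ backwards along the other extension, and controls the change of curvature by Gauss--Bonnet applied to the swept triangles, obtaining the same sign from $K>0$.
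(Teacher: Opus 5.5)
Your argument is correct and takes a genuinely different route from the paper. The one step you leave as an expectation, the existence of $q$ with $p_1\ll q\ll p_2$, does hold and can be checked directly, so your fallback is not needed.

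To check it, rescale to $K=1$ and work locally in the hyperboloid $\{\langle x,x\rangle=1\}\subset\R^3$, where $\langle x,y\rangle=x_0y_0+x_1y_1-x_2y_2$. There $\cos\ell(x,y)=\langle x,y\rangle$, and timelike geodesics are $t\mapsto\cos t\,x+\sin t\,u$. Put $p_1=(1,0,0)$ and $p_2=(\cos b,\sin b,0)$ with $b=\ell(p_1,p_2)<\pi$, time-oriented so that $(0,1,0)$ is future-directed at $p_1$. By convexity, after the reflection $x_2\mapsto -x_2$ if necessary, both $p_0$ and $p_3$ lie in $\{x_2<0\}$. Hence:
\begin{itemize}
\item the forward extension of $[p_0,p_1]$ is $\gamma_1(s)=\cos s\,p_1+\sin s\,(0,\cosh\theta,\sinh\theta)$;
\item the backward extension of $[p_2,p_3]$ is $\gamma_2(-\tau)=\cos\tau\,p_2-\sin\tau\,(-\cosh\varphi\sin b,\cosh\varphi\cos b,-\sinh\varphi)$.
\end{itemize}
Intersecting the two planes containing these geodesics gives $\gamma_1(s)=\gamma_2(-\tau)$, where $s\in(0,\pi)$ and $\tau\in(0,\pi)$ are determined by
\[
\cot s=\frac{\sinh\theta\cosh\varphi+\cosh\theta\sinh\varphi\cos b}{\sinh\varphi\,\sin b},\qquad \sin\tau=\frac{\sinh\theta}{\sinh\varphi}\,\sin s>0 .
\]
Along the two geodesics the global time coordinate of this point lies in $(0,\pi)$ and in $(b-\pi,b)$, respectively, so it is the same point in the universal cover as well. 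Thus $q$ exists with $\ell(p_1,q)=s$ and $\ell(q,p_2)=\tau$. The rest of your chain is then sound:
\[
\kappa^*(B)=\bm{\kappa}(K,r,l)\le\bm{\kappa}(K,r,l')\le\omega=\Theta-K\vol\bigl(\Delta(p_1,q,p_2)\bigr)<\Theta .
\]
This includes your identification of the angles of $\Delta(p_1,q,p_2)$ at $p_1,p_2$ with $\theta,\varphi$, and your signs in Gauss--Bonnet.

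The paper instead argues by direct trigonometry.
\begin{itemize}
\item It applies the Law of Cosines in $\Delta(p_0,p_1,p_2)$ and $\Delta(p_0,p_2,p_3)$; convexity enters only through $\ma_{p_2}(p_0,p_3)=\psi+\varphi$.
\item This gives an exact identity writing $\cos r$ as $\cos(a+b+c)$ minus non-negative correction terms.
\item Bounding these via $\sin x\sin y\le\sin^2\frac{x+y}{2}$ yields $\sin(\sqrt K r/2)<\cosh(\Theta/2)\sin(\sqrt K l/2)$, i.e.\ $\bm{\kappa}(K,r,l)<\Theta$.
\end{itemize}
That route needs no auxiliary point and no global information about how geodesics in $\lm K$ intersect.

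Your corner-cutting route is more conceptual. It pins down exactly where $K>0$ is used, namely the sign of the Gauss--Bonnet defect. For $K=0$ you would get $\omega=\Theta$, with strictness coming only from $l'<l$; for $K<0$ the argument breaks down. It also never needs the $D_K/2$ restriction that made this lemma necessary in the first place. Since each cut lowers both length and curvature while keeping the poly-segment convex, it could in principle be iterated along longer convex poly-segments. The price is exactly the existence of $q$, a global fact about $\lm K$ that must be verified as above.
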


\begin{proof}
Set
\begin{equation*}
\begin{aligned}
&a:=\ell(p_0,p_1), &&b:=\ell(p_1,p_2), &&c\;:=\ell(p_2,p_3), 
&d:=\ell(p_0,p_2), &&\psi:=\ma_{p_2}(p_0,p_1).
\end{aligned}
\end{equation*}
We claim
\begin{equation*}
\sin\left(\frac{\sqrt K\,r}{2}\right)
< \cosh\left(\frac{\Theta}{2}\right)
\sin\left(\frac{\sqrt K\,l}{2}\right).
\end{equation*}

By rescaling, it suffices to prove the claim for $K=1$, in which case $D_K=\pi$.  
Moreover, all side lengths appearing below are strictly smaller than $\pi$, and their $\sin$-values are positive.
By the Law of Cosines applied to the triangle $\Delta(p_0,p_1,p_2)$ at the vertices $p_1$ and $p_2$, respectively, we obtain 
\begin{equation}\label{eq add 1}
\cos d =
\cos a\,\cos b-\cosh\theta\,\sin a\, \sin b,\qquad \cos a =
\cos b\,\cos d+\cosh\psi\,\sin b\, \sin d.
\end{equation}
Combining these two identities yields
\begin{equation}\label{eq add 2}
\cosh\psi\,\sin d
=
\cos a\,\sin b+\cosh\theta\,\sin a\,\cos b.
\end{equation}
Moreover, solving for the respective $\cosh$-terms, squaring them and using $\cosh^2-\sinh^2=1$, we get
\begin{equation}\label{eq add 3}
\sinh\psi\,\sin d
=
\sinh\theta\,\sin a.
\end{equation}
Applying the Law of Cosines in $\Delta(p_0,p_2,p_3)$ at $p_2$ and using $\ma_{p_2}(p_0,p_3) = \ma_{p_2}(p_0,p_1) + \ma_{p_2}(p_1,p_3) = \psi + \varphi$, which holds by the assumed convexity, we obtain
\begin{equation*}
\cos r
=
\cos d\cos c-\cosh(\psi+\varphi)\sin d\sin c.    
\end{equation*}
Combining this with \eqref{eq add 1}, \eqref{eq add 2} and \eqref{eq add 3}, we get
\begin{equation*}
\begin{aligned}
\cos r
=\;&
\bigl(\cos a\cos b-\cosh\theta\,\sin a\sin b\bigr)\cos c\\
&-\cosh\varphi\,
\bigl(\cos a\sin b+\cosh\theta\,\sin a\cos b\bigr)\sin c\\
&-\sinh\theta\sinh\varphi\,\sin a\sin c.
\end{aligned}
\end{equation*}
Then, using the identity
\begin{equation*}
\cos(a+b+c)
=
\cos a\,\cos b\,\cos c
-\sin a\,\sin b\,\cos c
-\cos a\,\sin b\,\sin c
-\sin a\,\cos b\,\sin c,
\end{equation*}
we obtain
\begin{equation*}
\begin{aligned}
\cos r
=\;&
\cos(a+b+c)
-(\cosh\theta-1)\sin a\,\sin(b+c)\\
&-(\cosh\varphi-1)\sin c\,\sin(a+b)\\
&-\Big((\cosh\theta-1)(\cosh\varphi-1)\cos b
+\sinh\theta\sinh\varphi\Big)\sin a\,\sin c.
\end{aligned}
\end{equation*}
We now estimate the right-hand side. Since $a+b+c=l<\pi,$
we have
\begin{equation*}
\sin a\, \sin(b+c)\leq \sin^2\frac{l}{2},
\qquad
\sin c\, \sin(a+b)\leq \sin^2\frac{l}{2},\qquad \sin a\, \sin c< \sin^2\frac{l}{2}.
\end{equation*}
Set
\begin{equation*}
C:=(\cosh\theta-1)(\cosh\varphi-1)\cos b
+\sinh\theta\sinh\varphi.
\end{equation*}
Then $C > 0$. Indeed,
\begin{equation*}
\begin{aligned}
C&>
-(\cosh\theta-1)(\cosh\varphi-1)
+\sinh\theta\sinh\varphi\\
&=
4\sinh\frac\theta2\sinh\frac{\varphi}{2}
\cosh\frac{\theta-\varphi}{2}
>0.
\end{aligned}
\end{equation*}
Moreover, 
\begin{equation*}
C<(\cosh\theta-1)(\cosh\varphi-1)
+\sinh\theta\sinh\varphi.
\end{equation*}
Therefore,
\begin{equation*}
\begin{aligned}
\cos r
&>
\cos l-\sin^2\frac{l}{2}
\Big[(\cosh\theta-1)+(\cosh\varphi-1)
+(\cosh\theta-1)(\cosh\varphi-1)
+\sinh\theta\sinh\varphi\Big]\\
&=\cos l-(\cosh\Theta-1)\sin^2\frac{l}{2}\\
&=1-2\cosh^2\frac{\Theta}{2}\,\sin^2\frac{l}{2}.
\end{aligned}
\end{equation*}
Equivalently,
\begin{equation*}
1-2\sin^2\frac{r}{2}
>
1-2\cosh^2\frac{\Theta}{2}\,\sin^2\frac{l}{2},
\end{equation*}
and hence
\begin{equation*}
\sin\frac{r}{2}<
\cosh\frac{\Theta}{2}\,\sin\frac{l}{2}.
\end{equation*}
Reintroducing the factor $\sqrt K$, we get
\begin{equation*}
\sin\left(\frac{\sqrt K\,r}{2}\right)<\cosh\left(\frac{\Theta}{2}\right)
\sin\left(\frac{\sqrt K\,l}{2}\right),
\end{equation*}
by the reverse triangle inequality, we know that $l\leq r<D_K$, so the function $t\mapsto \sin(\sqrt K\,t/2)$ is increasing on
$[0,D_K]$, and therefore
\begin{equation*}
1\leq\frac{\sin(\sqrt K\,r/2)}{\sin(\sqrt K\,l/2)}.
\end{equation*}
Defining
\begin{equation*}
\kappa_0:=2\,\arcosh\left(
\frac{\sin(\sqrt K\,r/2)}{\sin(\sqrt K\,l/2)}\right),
\end{equation*}
we get $\kappa_0< \Theta$. 
Now, by Lemma \ref{lem: bi-segment and rigidity}, $\kappa_0$ is precisely the total curvature of an isosceles bi-segment where the endpoints are a distance $r$ apart, and the sides are each of length $l/2$. \qedhere


\end{proof}




Next, we show that a timelike curve with finite total curvature must be rectifiable. 
To this end, we first prove a characterisation of the $\ell$-length of a timelike curve as a limit of total $\ell$-variations with respect to mesh-size, akin to the metric result.

\begin{lemma}[Length as limit]
\label{Length as limit}
Let $X$ be a \LpLS and let $\gamma \colon[a,b] \to X$ be a timelike curve. 
Suppose that $\ell$ is continuous. 
Then 
\begin{equation*}
L(\gamma)=\lim_{|\sigma|\to 0}\sum_{i=0}^{N-1}\ell\big(\gamma(t_i),\gamma(t_{i+1})\big),
\end{equation*}
where $\sigma=\{a=t_0 < t_1 < \ldots < t_N=b\}$ is a partition of $[a,b]$ and $|\sigma|:=\max_{0\le i\le N-1}(t_{i+1}-t_i)$.
\end{lemma}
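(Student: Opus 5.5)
We argue as in the metric case, with inequalities reversed: refining a partition can only decrease the sum $V(\sigma):=\sum_i \ell(\gamma(t_i),\gamma(t_{i+1}))$, by the reverse triangle inequality (the points along $\gamma$ are causally ordered). By Definition \ref{length}, $L(\gamma)=\inf_\sigma V(\sigma)$. Since every $V(\sigma)\ge L(\gamma)$, we get $\liminf_{|\sigma|\to0}V(\sigma)\ge L(\gamma)$ for free. It therefore suffices to show $\limsup_{|\sigma|\to 0}V(\sigma)\le L(\gamma)$. That is, given $\eps>0$, we must find $\delta>0$ such that $V(\sigma)\le L(\gamma)+\eps$ whenever $|\sigma|<\delta$. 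If $L(\gamma)=+\infty$ there is nothing to show, so we may assume it is finite. Since $\gamma$ is timelike, each term is finite and in fact $V(\sigma)\le\ell(\gamma(a),\gamma(b))$.

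First we fix a partition $P=\{s_0<\dots<s_M\}$ with $V(P)<L(\gamma)+\eps/2$. Let $\sigma$ be an arbitrary partition with small mesh, and let $\sigma\cup P$ be the common refinement. Refinement gives $V(\sigma\cup P)\le V(P)$, so it remains to control $V(\sigma)-V(\sigma\cup P)$.

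These two sums differ only on the at most $M-1$ intervals $[t_j,t_{j+1}]$ of $\sigma$ that contain an interior point $s_k$ of $P$ in their interior. On such an interval, the difference is
\[
\ell(\gamma(t_j),\gamma(t_{j+1}))-\ell(\gamma(t_j),\gamma(s_k))-\ell(\gamma(s_k),\gamma(t_{j+1})),
\]
assuming for simplicity a single $s_k$ per interval, which holds for small mesh once $\delta$ is below the minimal gap of $P$. This is bounded by $\ell(\gamma(t_j),\gamma(t_{j+1}))$, since $\ell\ge0$.

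The key step is to make this term uniformly small. The map $(s,t)\mapsto\ell(\gamma(s),\gamma(t))$ is continuous on the compact set $\{a\le s\le t\le b\}$, because $\ell$ and $\gamma$ are continuous, and it vanishes on the diagonal, since $\ell(x,x)=0$ by the reverse triangle inequality applied to $x\le x\le x$ together with finiteness. By uniform continuity there is therefore a $\delta$ with $\ell(\gamma(s),\gamma(t))<\eps/(2M)$ whenever $0\le t-s<\delta$. Summing over at most $M$ bad intervals gives $V(\sigma)\le V(P)+\eps/2<L(\gamma)+\eps$.

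The main point to check carefully is the vanishing on the diagonal and the finiteness of $\ell$ along $\gamma$. We get finiteness from $\ell(\gamma(s),\gamma(t))\le \ell(\gamma(a),\gamma(b))$, provided $\ell(\gamma(a),\gamma(b))<\infty$, which we take as part of the standing continuity assumption (continuity into $[0,\infty)$). Everything else is routine.
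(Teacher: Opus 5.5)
Your proof is correct and follows essentially the same route as the paper: fix a nearly optimal partition, pass to the common refinement (which can only lower the sum), and use uniform continuity together with $\ell(x,x)=0$ to make the at most $M-1$ split terms uniformly small. The only differences are cosmetic: you apply uniform continuity directly to $(s,t)\mapsto\ell(\gamma(s),\gamma(t))$ on the parameter triangle, whereas the paper applies it separately to $\ell$ on $\gamma([a,b])^2$ and to $\gamma$, and you state explicitly the finiteness of $\ell$ and $\ell(x,x)=0$, which the paper uses tacitly.
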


\begin{proof}
For a partition $\sigma=\{a=t_0<\cdots<t_N=b\}$, denote
\[
V_\sigma(\gamma):=\sum_{i=0}^{N-1}\ell\big(\gamma(t_i),\gamma(t_{i+1})\big).
\]
By the definition of $L$, see \eqref{definition of length}, we know that  $L(\gamma)\leq V_\sigma(\gamma)$ for every partition $\sigma$. Hence
\begin{equation}\label{liminf of length}
L(\gamma)\leq \liminf_{|\sigma|\to 0}V_\sigma(\gamma).
\end{equation}
It therefore remains to show that for every $M>L(\gamma)$, there exists $\delta>0$ such that for all partitions $\sigma'$ with $|\sigma'|<\delta$, one has $V_{\sigma'}(\gamma)<M.$

Let $M>L(\gamma)$. Choose $\varepsilon>0$ such that $L(\gamma)<M-\varepsilon$. 
By the definition of $L(\gamma)$, there exists a partition 
$\sigma=\{a=t_0<t_1<\ldots<t_N=b\}$ such that $V_\sigma(\gamma)<M-\varepsilon$. 
Refining $\sigma$, if necessary, we may assume that $N\ge 2$; by the reverse triangle inequality, this does not increase $V_\sigma(\gamma)$.
Since $\gamma([a,b])$ is compact and $\ell$ is continuous, $\ell$ is uniformly continuous on 
$\gamma([a,b])\times\gamma([a,b])$. Hence there exists $\delta'>0$ such that 
\[
\ell\big(\gamma(s),\gamma(t)\big)<\frac{\eps}{2(N-1)},\qquad \mathrm{when}\,\,\,\mathrm{d}\big(\gamma(s),\gamma(t)\big)<\delta'.
\]
Further, as $\gamma$ is continuous on the compact interval $[a,b]$, it is uniformly continuous, and hence there exists $\delta_0>0$ such that 
\[
\mathrm{d}\big(\gamma(s),\gamma(t)\big)<\delta',\qquad \mathrm{when}\,\,\,|t-s|<\delta_0.
\]
Now set
\[
\delta:=\min\left\{\frac{\delta_0}{2},\frac{m}{4}\right\},\qquad \mathrm{where}\,\,m:=\min_{0\le i\le N-1}(t_{i+1}-t_i)>0.
\]
Let now $\sigma'=\{a=s_0<\cdots<s_{N'}=b\}$ be a partition of $[a,b]$ with $|\sigma'|<\delta$. 
For each $i=1,\ldots,N-1$, denote by $t_i'$ the vertex of $\sigma'$ that is closest to $t_i$ among those that satisfy $t_i'\le t_i$, and by $t_i''$ the vertex of $\sigma'$ that is closest to $t_i$ among those that satisfy $t_i''>t_i$. Then $t_i'\le t_i<t_i''$, and since $|\sigma'|<m/4$, we have
\[
t_i'\le t_i<t_i''<t_{i+1}'\le t_{i+1}<t_{i+1}'',\qquad \forall \,i=1,\ldots,N-2.
\]
Moreover,
\[
t_i''-t_i'\le (t_i''-t_i)+(t_i-t_i')<2|\sigma'|<2\delta\le \delta_0,
\]
and hence $\mathrm{d}(\gamma(t_i'),\gamma(t_i''))<\delta'.$
Therefore,
\[
\ell\big(\gamma(t_i'),\gamma(t_i'')\big)<\frac{\eps}{2(N-1)},
\qquad\forall \,i=1,\ldots,N-1.
\]
Now compare the partitions $\sigma'$ and $\sigma\cup\sigma'$. The latter is obtained from $\sigma'$ by inserting the points $t_1,\ldots,t_{N-1}$. Hence
\begin{align*}
V_{\sigma'}(\gamma)-V_{\sigma\cup\sigma'}(\gamma)
&=
\sum_{i=1}^{N-1}
\Bigl(
\ell(\gamma(t_i'),\gamma(t_i''))
-\ell(\gamma(t_i'),\gamma(t_i))
-\ell(\gamma(t_i),\gamma(t_i''))
\Bigr) \\
&\le
\sum_{i=1}^{N-1}\ell\big(\gamma(t_i'),\gamma(t_i'')\big) \\
&<
(N-1)\frac{\eps}{2(N-1)}
=
\frac{\eps}{2}.
\end{align*}

\noindent
On the other hand, $\sigma\cup\sigma'$ is a refinement of $\sigma$, so by the reverse triangle inequality,
\[
V_{\sigma\cup\sigma'}(\gamma)\le V_\sigma(\gamma)<M-\eps.
\]
Therefore,
\[
V_{\sigma'}(\gamma)
<
V_{\sigma\cup\sigma'}(\gamma)+\frac{\eps}{2}
<
M-\eps+\frac{\eps}{2}
<
M.
\]
This proves that there exists $\delta>0$ such that $V_{\sigma'}(\gamma)<M$ whenever $|\sigma'|<\delta$. Hence,
\[
\limsup_{|\sigma|\to 0}V_\sigma(\gamma)\le L(\gamma).
\]
Combining this with \eqref{liminf of length}, the proof is complete.
\end{proof}

\begin{proposition}[Rectifiability]
\label{prop: rectifiable}
Let $X$ be a  strongly causal and regular \LpLS and let $K \in \R$. 
Let $\gamma \colon[a,b] \to X$ be a timelike curve inside a ($\leq K$)-comparison neighbourhood $U$ such that $\ell(\gamma(a),\gamma(b)) < D_K$.
If $\mathrm{TC}(\gamma)<+\infty,$ then $\gamma$ is rectifiable.
\end{proposition}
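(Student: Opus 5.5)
Since the timelike diamonds of a restriction again satisfy all hypotheses, it suffices to prove that $L(\gamma)>0$ for every restriction $\gamma|_{[t_1,t_2]}$; by monotonicity of total curvature under restriction (an inscribed poly-segment of a subcurve extends to one of $\gamma$, and angles at the extra vertices are nonnegative), $\TC(\gamma|_{[t_1,t_2]})\le \TC(\gamma)<\infty$. Moreover $\ell(\gamma(t_1),\gamma(t_2))>0$ since $\gamma$ is timelike, and $\ell(\gamma(t_1),\gamma(t_2))\le \ell(\gamma(a),\gamma(b))<D_K$ by the reverse triangle inequality. So we may rename and assume $[t_1,t_2]=[a,b]$, with $r:=\ell(\gamma(a),\gamma(b))\in(0,D_K)$, and we must show $L(\gamma)>0$.

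The plan is to reduce to inscribed poly-segments and use the Majorisation Theorem (Theorem~\ref{thm: majorisation}) together with the model-space bi-segment estimate of Lemma~\ref{lem: bi-segment and rigidity}. By Lemma~\ref{Length as limit} (continuity of $\ell$ on $U$ holds by Definition~\ref{def: triangle comparison}(i), all relevant distances being below $D_K$), $L(\gamma)=\lim_{|\sigma|\to0}V_\sigma(\gamma)$, and $V_\sigma(\gamma)$ is exactly the length of the inscribed timelike poly-segment $\sigma$ with those vertices. Choose $\eps_0>0$ so that $\sup_{\sigma\in\Sigma_{\eps_0}(\gamma)}\kappa^*(\sigma)\le \TC(\gamma)+1=:\kappa_1$. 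It then suffices to prove a uniform lower bound $L(\sigma)\ge c(K,r,\kappa_1)>0$ for every inscribed poly-segment $\sigma$ with $\kappa^*(\sigma)\le\kappa_1$; the natural candidate is $c=\mathcal L(K,r,\kappa_1)$.

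For such $\sigma$, apply Theorem~\ref{thm: majorisation} to obtain $\tilde\sigma$ in $\lm K$ with the same length, same endpoint separation $r$, bounding a convex region with $[\tilde O,\tilde z]$. Convexity plus the majorising map $f$ being $\ell$-nondecreasing should give that the angles of $\tilde\sigma$ at vertices are at most the corresponding angles of $\sigma$ (compare comparison angles of small triangles at a vertex: $\ell$-distances in $X$ are at least those in $\lm K$ between corresponding points, while adjacent side lengths agree, and by the monotonicity remark after Proposition~\ref{prop: law of cosines monotonicity} the angle opposite the longest side grows). Hence $\kappa^*(\tilde\sigma)\le\kappa^*(\sigma)\le\kappa_1$. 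Now in $\lm K$ a convex poly-segment can be reduced to a bi-segment without increasing curvature and preserving length and endpoints: iterate Lemma~\ref{deformation2} (convexity guarantees the half-space hypothesis), and for $K>0$ use Lemma~\ref{lem:positive_tri_segment_estimate} when the smallness hypothesis fails. Finally Lemma~\ref{lem: bi-segment and rigidity} gives $\kappa_1\ge\bm\kappa(K,r,L(\sigma))$, which inverts to $L(\sigma)\ge\mathcal L(K,r,\kappa_1)>0$. Passing to the limit $|\sigma|\to0$ yields $L(\gamma)\ge\mathcal L(K,r,\kappa_1)>0$.

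The main obstacle is the angle inequality $\kappa^*(\tilde\sigma)\le\kappa^*(\sigma)$ from majorisation, and the reduction to a bi-segment when $K>0$, where Lemma~\ref{deformation2} needs $\ell(p_1,p_2)+\ell(p_2,p_3)<D_K/2$. For the latter I would first try to subdivide into pieces short enough for Lemma~\ref{deformation2}, merging vertices inside each, and use Lemma~\ref{lem:positive_tri_segment_estimate} to handle the final three-segment configuration. If this becomes too delicate, a cruder fallback for rectifiability alone suffices: it is enough to bound $L(\sigma)$ below by any positive function of $r$ and $\kappa_1$, which could be obtained by choosing subdivisions finely enough that all short triangles lie where $\lm K$ is close to Minkowski.
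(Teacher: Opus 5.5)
Your route is correct, apart from the $K>0$ reduction discussed below, but it differs from the paper's.

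\textbf{The paper's route.} The paper argues by contradiction. If $L(\eta)=0$ on some subarc whose endpoints have separation $r_0>0$, it takes inscribed poly-segments $P_n$ with $|P_n|\to 0$ and $l_n=L(P_n)\to 0$. Since eventually $l_n<D_K/2$, every pair of adjacent segments is short, so Lemma \ref{deformation2} alone reduces $\widetilde P_n$ to a bi-segment. Lemma \ref{lem: bi-segment and rigidity} then gives $\kappa^*(P_n)\ge \bm{\kappa}(K,r_0,l_n)\to+\infty$, contradicting $\TC(\gamma)<\infty$.

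\textbf{Your route.} You instead prove directly a quantitative bound $L(\sigma)\ge\mathcal{L}(K,r,\TC(\gamma)+1)$ for all fine inscribed poly-segments. This is essentially the poly-segment part of Theorem \ref{main theorem}. It is not circular, since inscribed poly-segments are trivially rectifiable.
\begin{itemize}
\item What it buys: an explicit lower bound, with rectifiability as a corollary.
\item What it costs: you must handle poly-segments of length up to $r<D_K$, which the contradiction argument avoids.
\item Your angle inequality, obtained via small triangles at a vertex and the monotonicity in the Law of Cosines, is fine. It even bypasses the angle comparison the paper uses there.
\item You do not need Lemma \ref{Length as limit}. Refining a partition only decreases $V_\sigma$, so $L(\gamma)$ is already the infimum over partitions of mesh $<\eps_0$.
\end{itemize}

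\textbf{The step that fails as sketched.} Your $K>0$ reduction by ``subdividing into short pieces and merging vertices inside each'' does not work. Lemma \ref{deformation2} keeps $p_1$ fixed but rotates $[p_1,p_2]$. If $p_1$ is an interior vertex of the whole poly-segment, the angle at $p_1$ changes and is not controlled by the lemma. So the lemma may only be applied at an end of the poly-segment, or in its time-reversed form at the other end.

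The correct bookkeeping is the one in the proof of Theorem \ref{main theorem}:
\begin{itemize}
\item With at least four segments, the first two and the last two segments are disjoint.
\item The total length is at most $r<D_K$, so one of these end pairs has length $<D_K/2$, and you deform there.
\item Only in the three-segment case can both end pairs fail the smallness condition, and there Lemma \ref{lem:positive_tri_segment_estimate} applies.
\end{itemize}

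\textbf{A simpler fix for rectifiability.} By additivity of $L$, it suffices to show $L(\gamma|_{[t_1,t']})>0$ for some $t'$ close to $t_1$ with $\ell(\gamma(t_1),\gamma(t'))<D_K/2$. Then every inscribed poly-segment has length $<D_K/2$, and Lemma \ref{deformation2} applies at every step. This puts you in the same short-length regime the paper exploits.
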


\begin{proof}
Assume, for the sake of contradiction, that $\gamma$ is not rectifiable. 
Then we can find $a \le s < t \le b$ such that $\eta:=\gamma|_{[s,t]}$ satisfies $L(\eta)=0$.
Since $\gamma$ is timelike, so is $\eta$. 
In particular, its endpoints $x:=\eta(s)$ and $y:=\eta(t)$ satisfy $r_0:=\ell(x,y)>0$. 

By Lemma \ref{Length as limit}, there exists a sequence of partitions whose mesh-sizes tend to zero and such that the corresponding $\ell$-sums converge to $L(\eta)=0$. 
Since $\eta$ is timelike and contained in a comparison neighbourhood, after possibly refining each partition, we may assume that consecutive partition points are joined by nonconstant $\ell$-maximising timelike geodesic segments. Thus, we obtain a sequence of timelike poly-segments $P_n$ inscribed in $\eta$ such that
\[
|P_n|\to 0,
\qquad
l_n:=L(P_n)\to 0.
\]
It is then not hard to see that, for each $n$, the timelike poly-segment $P_n$ is rectifiable. 
Moreover, since $l_n \to 0$, we have $l_n<D_K/2$ for large enough $n$.
Invoking the Majorisation Theorem \ref{thm: majorisation}, we therefore obtain a convex (and timelike) poly-segment $\widetilde P_n$ in $\lm{K}$ of the same length $L(\widetilde P_n)=L(P_n)=l_n$ and with the same time separation between endpoints as well as an anti-Lipschitz map from the region enclosed by $\widetilde P_n$ and $[\tx,\ty]$ into $U$ such that $\widetilde P_n$ and $[\tx,\ty]$ are, respectively, mapped onto $P_n$ and $[x,y]$ such that the $\ell$-length is preserved. 
Let $p,q,r$ denote any three subsequent vertices of $P_n$, and consider the triangle $\Delta(p,q,r)$, its comparison triangle $\Delta(\bp,\bq,\br)$ as well as the triangle $\Delta(\tp,\tq,\tr)$ which arises via $\widetilde P_n$. 
The side lengths involving the middle vertex are the same in all three triangles. 
By the anti-Lipschitz property of the majorising map, we obtain $\ell(\tp,\tr) \leq \ell(p,r)=\ell(\bp,\br)$, hence, by the Law of Cosines, $\ma_{\tq}(\tp,\tr) \leq \ma_{\bq}(\bp,\br)$. 
Moreover, by angle comparison, we have $\ma_{\bq}(\bp,\br) \leq \ma_q(p,r)$. 
Since this holds true for any three subsequent vertices, we thus have $\kappa^*(P_n) \geq \kappa^*(\widetilde P_n)$. 

We continue to work in the model space $\mathbb{L}^2(K)$. Since $l_n<D_K/2$, the length assumption in Lemma \ref{deformation2} is satisfied for every pair of adjacent segments throughout the reduction process. Applying Lemma \ref{deformation2} repeatedly, we obtain a timelike bi-segment $B_n$ of equal length, with the same time separation between its endpoints, and with no greater total curvature: 
\[
L(B_n)=L(\widetilde P_n)=l_n,\qquad
\kappa^*(B_n)\le \kappa^*(\widetilde P_n).
\]
Hence
\begin{equation*}
    \kappa^*(P_n)\geq \kappa^*(\widetilde P_n)\geq \kappa^*(B_n) \geq \bm{\kappa}(K,r_0,l_n),
\end{equation*}
where the last inequality follows from Lemma~\ref{lem: bi-segment and rigidity}.  Since $r_0>0$ is fixed and $l_n\to 0$, we have $\bm{\kappa}(K,r_0,l_n)\to+\infty$. Consequently, $\kappa^*(P_n)\to+\infty$.
Because $P_n$ is inscribed in $\eta$ and $|P_n|\to 0$, the definition of total curvature gives
\begin{equation*}
    \mathrm{TC}(\eta)\geq \lim_{n\to+\infty} \kappa^*(P_n)=+\infty.
\end{equation*}
Hence $\mathrm{TC}(\eta)=+\infty.$ Since any poly-segment inscribed in $\eta$ can be extended to one inscribed in $\gamma$ by adding partitions of $[a,s]$ and $[t,b]$ with mesh tending to zero, and since the additional angles are non-negative, we have 
\begin{equation*}
    \mathrm{TC}(\gamma)\geq \mathrm{TC}(\eta)=+\infty,
\end{equation*}
which is a contradiction.
\end{proof}

Finally, let us state and prove our main result. 

\begin{theorem}[Lower bound on the length of timelike curves]
\label{main theorem}
Let $X$ be a strongly causal and regular \LpLS and let $K \in \R$. 
Let $\gamma$ be a timelike curve inside a ($\leq K$)-comparison neighbourhood. 
Denote by $x$ and $y$ the endpoints of $\gamma$ and set $r=\ell(x,y)<D_K$ and $\kappa=\TC(\gamma)\in [0,+\infty)$. 
Then 
\begin{equation}
\label{eq: length bound thm}
    L(\gamma) \geq \mathcal{L}(K,r,\kappa).
\end{equation}
If $\gamma$ is a timelike poly-segment and attains equality in \eqref{eq: length bound thm}, then $\gamma$ is a timelike geodesic in the case $\kappa=0$, and is the $\ell$-length-preserving image of an isosceles timelike bi-segment in $\lm K$ with total curvature $\kappa$ if $\kappa>0$.
\end{theorem}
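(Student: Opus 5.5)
We reduce the claim to the model-space computations of Lemma \ref{lem: bi-segment and rigidity}, following the strategy already used in the proof of Proposition \ref{prop: rectifiable}. By Proposition \ref{prop: rectifiable}, $\gamma$ is rectifiable, so $L(\gamma)>0$. By Lemma \ref{Length as limit} (using continuity of $\ell$ in the comparison neighbourhood), we choose inscribed timelike poly-segments $P_n$ with $|P_n|\to 0$ and $l_n:=L(P_n)\to L(\gamma)$. Since $\TC(\gamma)=\limsup_{|\sigma|\to0}\kappa^*(\sigma)$, we have $\limsup_n \kappa^*(P_n)\le \kappa$. It therefore suffices to show, for each poly-segment $P$ from $x$ to $y$,
\[
L(P)\ge \mathcal L\bigl(K,r,\kappa^*(P)\bigr).
\]
Indeed, $\mathcal L(K,r,\cdot)$ is continuous and decreasing in its last argument. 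Given $\eps>0$, eventually $\kappa^*(P_n)\le\kappa+\eps$, so $l_n\ge\mathcal L(K,r,\kappa+\eps)$. Letting $n\to\infty$ and then $\eps\to0$ gives \eqref{eq: length bound thm}.

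To prove the poly-segment inequality, we apply the Majorisation Theorem \ref{thm: majorisation} to $P$. This produces a convex poly-segment $\widetilde P$ in $\lm K$ with the same length and the same endpoint separation $r$. Exactly as in the proof of Proposition \ref{prop: rectifiable}, combining the anti-Lipschitz property, the Law of Cosines and angle comparison gives $\kappa^*(\widetilde P)\le\kappa^*(P)$. Next we reduce $\widetilde P$ to a bi-segment $B$ with the same length and endpoints and $\kappa^*(B)\le\kappa^*(\widetilde P)$, by iterating Lemma \ref{deformation2} to remove vertices one at a time. The half-space hypothesis holds because $\widetilde P$ is convex. Lemma \ref{lem: bi-segment and rigidity} then gives $\kappa^*(B)\ge\bm\kappa(K,r,L(P))$. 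The map $l_0\mapsto\bm\kappa(K,r,l_0)$ is decreasing, and inverting it is precisely $l_0\ge\mathcal L(K,r,\kappa^*(B))$; this inversion is the formula \eqref{length bound}. Monotonicity of $\mathcal L$ in $\kappa$ finishes the inequality.

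The main obstacle is the reduction step when $K>0$, since Lemma \ref{deformation2} requires adjacent lengths below $D_K/2$. We first subdivide $P$ so that the bound of Lemma \ref{deformation2} applies locally; adding vertices on segments does not change $\kappa^*$. If the total length is not below $D_K/2$, we reduce down to three segments and then invoke Lemma \ref{lem:positive_tri_segment_estimate}, which directly gives an isosceles bi-segment of no greater curvature. Some care is needed to make sure that the iterated deformations preserve convexity and the length hypotheses; I expect this bookkeeping to be the most delicate part.

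For rigidity, let $\gamma$ be a poly-segment attaining equality. By Proposition \ref{prop: curvatures agree} (after subdividing so that segments are shorter than $D_K$), $\kappa=\kappa^*(\gamma)$, so all the inequalities in the chain above must be equalities. The strictness clause of Lemma \ref{deformation2}, together with the strict inequality in Lemma \ref{lem:positive_tri_segment_estimate}, forces $\widetilde P$ to have at most one vertex with a nonzero angle. The equality case of Lemma \ref{lem: bi-segment and rigidity} then forces $\widetilde P$ to be a geodesic if $\kappa=0$, and an isosceles bi-segment with curvature $\kappa$ if $\kappa>0$. In the first case $L(\gamma)=r$, so $\gamma$ is maximising, i.e.\ a timelike geodesic. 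In the second case, the majorising map restricted to $\widetilde P$ is an $\ell$-length-preserving map onto $\gamma$, which is the claimed description.
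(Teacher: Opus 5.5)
Your route is the paper's own. You majorise to a convex model poly-segment with no larger total rotation, collapse it with Lemma~\ref{deformation2} (and Lemma~\ref{lem:positive_tri_segment} for $K>0$), invert the formula of Lemma~\ref{lem: bi-segment and rigidity}, and pass to general curves via Lemma~\ref{Length as limit}. Your $\limsup$ version of the approximation step is fine, and so is your rigidity argument, including the observation $L(\gamma)=r$ when $\kappa=0$.

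The genuine gap is the step you flagged yourself for $K>0$. You assert that one can ``reduce down to three segments'', but you give no argument that the hypothesis $\ell(p_1,p_2)+\ell(p_2,p_3)<D_K/2$ of Lemma~\ref{deformation2} is available at each stage. The remedy you propose does not provide it:
\begin{itemize}
\item Subdividing first does not help, because each application of the lemma merges two segments into one. Applied repeatedly at the same end, the relevant sum is just the cumulative length already absorbed, and it eventually reaches $D_K/2$ whenever $L(P)\ge D_K/2$.
\item The lemma cannot be applied ``locally'' to four consecutive vertices in the interior of the poly-segment. It only accounts for the angles at its two middle vertices, whereas moving $p_2$ also changes the angle at $p_1$ unless $p_1$ is the initial endpoint.
\end{itemize}
So the lemma can only be used at the initial end, or in its time-reversed form at the final end.

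The missing observation is a simple pigeonhole argument. All deformations preserve length, and $L(\widetilde P)=L(P)\le r<D_K$ by the reverse triangle inequality. Suppose the current poly-segment has at least four segments. Then its first two and last two segments are disjoint, so their combined length is less than $D_K$. Hence at least one of the two end pairs has total length $<D_K/2$, and you apply Lemma~\ref{deformation2} or its time reversal at that end. This reduces you to three segments. At that point either one end pair is still below $D_K/2$ and one more deformation gives a bi-segment, or you apply Lemma~\ref{lem:positive_tri_segment>}; if one of its two angles vanishes, you already have a bi-segment. With this in place no preliminary subdivision is needed and the rest of your argument goes through; this is exactly how the paper handles the case $K>0$.
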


\begin{proof}
Suppose first that $\gamma$ is a poly-segment, say it consists of $m$ segments. Denote the vertices
by $p_0,\ldots,p_m$. Then $\kappa=\kappa^*(\gamma)$ by Proposition \ref{prop: curvatures agree}. 
Moreover, $\gamma$ is rectifiable by Proposition \ref{prop: rectifiable}. 
Thus, using Theorem \ref{thm: majorisation} as in the proof of Proposition \ref{prop: rectifiable}, we know that there exists a convex
poly-segment $\widetilde\gamma$ with $m$ segments in $\lm K$ such that $L(\gamma)=L(\widetilde\gamma)$ and
$\kappa=\kappa^*(\gamma)\geq \kappa^*(\widetilde\gamma)$. Set
$\widetilde\gamma_m:=\widetilde\gamma$, and denote its vertices by
$\widetilde p_0,\ldots,\widetilde p_m$. As long as the current number of segments is greater than $3$, it is always the case that either
\begin{equation*}     
    \ell(\widetilde p_0,\widetilde p_1)
    +\ell(\widetilde p_1,\widetilde p_2)<\frac{D_K}{2} \qquad
    \text{ or } \qquad
    \ell(\widetilde p_{m-2},\widetilde p_{m-1})
    +\ell(\widetilde p_{m-1},\widetilde p_m)<\frac{D_K}{2}.
\end{equation*}
Hence, we can apply Lemma \ref{deformation2} to that poly-segment.
This decreases the number of vertices by one while not increasing the total curvature. Denoting
the resulting curve by $\widetilde\gamma_{m-1}$, and relabelling its vertices as
$\widetilde p_0,\ldots,\widetilde p_{m-1}$, this gives
$\kappa^*(\widetilde\gamma_m)\ge \kappa^*(\widetilde\gamma_{m-1})$.

Just as in Proposition \ref{prop: rectifiable}, we can keep applying Lemma \ref{deformation2} until  we have reduced
$\widetilde\gamma_m$ to the poly-segment $\widetilde\gamma_3$ consisting of three segments, whose
vertices we denote by $\widetilde p_0,\ldots,\widetilde p_3$, where we distinguish the following two cases:
if $K>0$ and both
\begin{equation*}
    \ell(\widetilde p_0,\widetilde p_1)
    +\ell(\widetilde p_1,\widetilde p_2)\geq \frac{D_K}{2}
    \qquad \text{ and } \qquad
    \ell(\widetilde p_1,\widetilde p_2)
    +\ell(\widetilde p_2,\widetilde p_3)\geq \frac{D_K}{2},    
\end{equation*}
then we apply Lemma~\ref{lem:positive_tri_segment_estimate} directly to obtain an isosceles bi-segment $\tilde \alpha$ of the same length, with the same time separation between its endpoints, and with no greater total curvature. 
Otherwise, we may apply Lemma~\ref{deformation2} one more time to end up with a bi-segment $\tilde\gamma_2$ of equal length, with the same time separation between its endpoints, and with no greater total curvature. 
By Lemma~\ref{lem: bi-segment and rigidity}, we know
that among all such bi-segments, the isosceles one, still denoted by $\widetilde\alpha$, has the smallest total
curvature. That is, in summary, we have

\begin{equation}
\label{eq: estimates summary}
    \kappa = \kappa^*(\gamma) \geq \kappa^*(\tilde\gamma)=\kappa^*(\tilde\gamma_m) \geq \kappa^*(\tilde\gamma_{m-1}) \geq \ldots \geq \kappa^*(\tilde\gamma_3) \big[ \geq \kappa^*(\tilde\gamma_2) 
    \big] \geq \kappa^*(\tilde \alpha). 
\end{equation}

Now, the length of $\tilde \alpha$, which is equal to that of $\gamma$, as the above procedure preserves lengths, is explicitly given by $L(\tilde \alpha)=\mathcal{L}(K,r,\kappa^*(\tilde \alpha))$, cf.\ \eqref{length bound}.
Since $\mathcal{L}$ is decreasing with respect to the total curvature and $\kappa^*(\tilde \alpha)\le \kappa$, it follows that 
\begin{equation}\label{eq: LengthIneqMainThm}
    L(\gamma)= L(\tilde \alpha)=\mathcal{L}(K,r,\kappa^*(\tilde \alpha))\ge \mathcal{L}(K,r,\kappa).
\end{equation}

Let now $\gamma$ be any timelike curve. 
Consider a sequence of inscribed poly-segments $\sigma_n$ such that 
\begin{equation*}
    |\sigma_n| \to 0 \qquad \textnormal{and} \qquad \kappa^*(\sigma_n) \to \kappa.
\end{equation*} 
By definition, we have $L(\sigma_n) \geq L(\gamma)$, and each $\sigma_n$ has endpoints $x$ and $y$.
In particular, we also have that $L(\sigma_n) \leq \ell(x,y) < D_K$.
The above arguments give $L(\sigma_n) \geq \mathcal{L}(K,r,\kappa^*(\sigma_n))$.
By Lemma \ref{Length as limit}, we have $L(\sigma_n) \to L(\gamma)$, and so 
\begin{equation*} 
\label{eq: LengthApproximationMainThm}
    L(\gamma) = \lim_{n \rightarrow \infty}{L(\sigma_n)} \geq \lim_{n \rightarrow \infty}\mathcal{L}(K,r,\kappa^*(\sigma_n)) = \mathcal{L}(K,r,\kappa),
\end{equation*}
as $\mathcal{L}$ is clearly continuous in the total curvature. 

Finally, suppose $\gamma$ is a poly-segment and achieves equality in the length bound \eqref{eq: length bound thm}. 
If $\kappa=0$, then the poly-segment $\gamma$ is a geodesic and the claim follows immediately, so suppose that $\kappa >0$.
Equality in \eqref{eq: length bound thm} implies that we have equality in \eqref{eq: LengthIneqMainThm} and so $\kappa^*(\tilde \alpha) = \kappa$, as $\mathcal{L}$ is strictly decreasing in the total curvature.
Therefore, equality holds throughout \eqref{eq: estimates summary}, implying that $\tilde \gamma$ must have been a bi-segment to begin with, as otherwise at least one of the steps in the deformation of Lemma \ref{deformation2} or Lemma \ref{lem:positive_tri_segment_estimate} would have strictly decreased the curvature (ignoring vertices in the interior of a segment).
Moreover, since $\kappa^*(\tilde \gamma)=\kappa^*(\tilde \alpha)$, equality holds in the application of Lemma \ref{lem: bi-segment and rigidity}. By the equality statement in Lemma \ref{lem: bi-segment and rigidity}, the bi-segment $\tilde \gamma$ must therefore be isosceles, and so, $\gamma$ is indeed the $\ell$-length-preserving image of an isosceles timelike bi-segment as the boundary map in Theorem \ref{thm: majorisation} preserves $\ell$-length.
\end{proof}

\begin{remark}[Asymptotic rigidity]
If $\gamma$ is an arbitrary timelike curve that achieves equality in the length estimate \eqref{eq: length bound thm}, one can follow a similar argument as above along a sequence of approximating poly-segments $\sigma_n$ to conclude that $\gamma = \lim_{n \rightarrow \infty} f_n(\tilde \sigma_n)$ where $\tilde \sigma_n$ is a sequence of bi-segments in $\lm K$ converging to an isosceles bi-segment $\tilde\alpha$ and the maps $f_n$ are $\ell$-length preserving.
The full rigidity statement, i.e., the conclusion that $\gamma$ is the ($\ell$-length preserving) image of this isosceles bi-segment $\tilde \alpha$, would require that the maps $f_n \colon \tilde \alpha_n \rightarrow \sigma_n$ converge to a map $f \colon \tilde \alpha \rightarrow \gamma$, which is not guaranteed by Theorem \ref{thm: majorisation}. 
\end{remark}

\begin{remark}[On assumptions]
Note that, in general, the theorem fails to hold if the curve leaves the $(\leq K)$-comparison neighbourhood.
Indeed, consider the Lorentzian cylinder $\R \times \mathbb S^1$ with the induced intrinsic structure from 3-dimensional Minkowski space. 
Its curvature is locally bounded from above by zero, but not globally (this is because the cylinder is not future one-connected, cf.\ \cite{EGCartanHadamard}).
Following any null geodesic $\gamma$ that winds around the cylinder once, we have connected two timelike related points $x$ and $y$ (its endpoints) with a curve of zero length.
Slightly deforming this curve, we can produce a family of timelike geodesics $\gamma_n$ (with total curvature $\kappa = 0$) such that $\ell(\gamma_n(0),\gamma_n(1)) \geq \ell(x,y)$ and $L(\gamma_n) \rightarrow L(\gamma) =0$ while $\mathcal{L}(K,\ell(x,y),\kappa) = \ell(x,y) >0$. 
\end{remark}

If the curvature bound on $X$ is global, then the only restrictions for the existence of a lower bound on the length are that the time separation of the endpoints be less than $D_K$ and that the total curvature be finite.

\section*{Acknowledgements}
\setcounter{equation}{0}
We would like to thank Tobias Beran for insightful comments and discussion in the early stages of this work. 
This research was supported in part by the Austrian Science Fund (FWF) [Grants DOI \href{https://doi.org/10.55776/PAT1996423}{10.55776/PAT1996423} and \href{https://doi.org/10.55776/EFP6}{10.55776/EFP6}]. 
FR and ZX acknowledge the support of the European Union - NextGenerationEU, in the framework of the PRIN Project `Contemporary perspectives on geometry and gravity' (code 2022JJ8KER – CUP G53D23001810006). The views and opinions expressed are solely those of the authors and do not necessarily reflect those of the European Union, nor can the European Union be held responsible for them. 
ZX is also supported by the Young Scientist Programs of the Ministry of Science \& Technology of China (2021YFA1000900, 2021YFA1002200), the Shandong Provincial Natural Science Foundation (ZR2025QB05), and a China Scholarship Council grant (No.\allowbreak 202406340143).
For open access purposes, the authors have applied a CC BY public copyright license to any author-accepted manuscript version arising from this submission.

\bibliography{references} 

@article{ABG2010,
  title={Total curvature and simple pursuit on domains of curvature bounded above},
  author={Alexander, S and Bishop, R and Ghrist, R},
  journal={Geometriae Dedicata},
  volume={149},
  number={1},
  pages={275--290},
  year={2010},
  publisher={Springer}
}

@article {ML2003,
    AUTHOR = {Maneesawarng, Chaiwat and Lenbury, Yongwimon},
     TITLE = {Total curvature and length estimate for curves in {${\rm
              CAT}(K)$} spaces},
   JOURNAL = {Differential Geom. Appl.},
  FJOURNAL = {Differential Geometry and its Applications},
    VOLUME = {19},
      YEAR = {2003},
    NUMBER = {2},
     PAGES = {211--222},
      ISSN = {0926-2245,1872-6984},
   MRCLASS = {53C22},
  MRNUMBER = {2002660},
MRREVIEWER = {Koichi\ Nagano},
       DOI = {10.1016/S0926-2245(03)00031-7},
       URL = {https://doi.org/10.1016/S0926-2245(03)00031-7},
}

@article {LMM2010,
    AUTHOR = {Castrill\'on L\'opez, M. and Fern\'andez Mateos, V. and
              Mu\~noz Masqu\'e, J.},
     TITLE = {Total curvature of curves in {R}iemannian manifolds},
   JOURNAL = {Differential Geom. Appl.},
  FJOURNAL = {Differential Geometry and its Applications},
    VOLUME = {28},
      YEAR = {2010},
    NUMBER = {2},
     PAGES = {140--147},
      ISSN = {0926-2245,1872-6984},
   MRCLASS = {53C22 (53C20)},
  MRNUMBER = {2594458},
MRREVIEWER = {Tobias\ Ekholm},
       DOI = {10.1016/j.difgeo.2009.10.008},
       URL = {https://doi.org/10.1016/j.difgeo.2009.10.008},
}

@misc{RXZ26+,
      title={Lorentz meets {P}tolemy}, 
      author={Felix Rott and Zhe-Feng Xu and Matteo Zanardini},
      year={2026},
      eprint={2601.21596},
      archivePrefix={arXiv},
      primaryClass={math.DG},
      url={https://arxiv.org/abs/2601.21596}, 
    note={Preprint: \url{https://arxiv.org/abs/2601.21596}}
}

@article {BKR24,
    AUTHOR = {Beran, Tobias and Kunzinger, Michael and Rott, Felix},
     TITLE = {On curvature bounds in {L}orentzian length spaces},
   JOURNAL = {J. Lond. Math. Soc. (2)},
  FJOURNAL = {Journal of the London Mathematical Society. Second Series},
    VOLUME = {110},
      YEAR = {2024},
    NUMBER = {2},
     PAGES = {Paper No. e12971, 41},
      ISSN = {0024-6107,1469-7750},
   MRCLASS = {53C23 (53B30 53C50)},
  MRNUMBER = {4781260},
MRREVIEWER = {Argam\ Ohanyan},
       DOI = {10.1112/jlms.12971},
       URL = {https://doi.org/10.1112/jlms.12971},
}

@article {BFG15,
    AUTHOR = {Barros, Manuel and Ferr\'andez, Angel and Garay, \'Oscar J.},
     TITLE = {Extremal curves of the total curvature in homogeneous
              3-spaces},
   JOURNAL = {J. Math. Anal. Appl.},
  FJOURNAL = {Journal of Mathematical Analysis and Applications},
    VOLUME = {431},
      YEAR = {2015},
    NUMBER = {1},
     PAGES = {342--364},
      ISSN = {0022-247X,1096-0813},
   MRCLASS = {53A04 (53C30)},
  MRNUMBER = {3357589},
MRREVIEWER = {Gary\ R.\ Jensen},
       DOI = {10.1016/j.jmaa.2015.05.072},
       URL = {https://doi.org/10.1016/j.jmaa.2015.05.072},
}

@article {FGL01,
    AUTHOR = {Ferr\'andez, Angel and Gim\'enez, Angel and Lucas, Pascual},
     TITLE = {Null helices in {L}orentzian space forms},
   JOURNAL = {Internat. J. Modern Phys. A},
  FJOURNAL = {International Journal of Modern Physics A. Particles and
              Fields. Gravitation. Cosmology. Astrophysics. Accelerator
              Physics},
    VOLUME = {16},
      YEAR = {2001},
    NUMBER = {30},
     PAGES = {4845--4863},
      ISSN = {0217-751X,1793-656X},
   MRCLASS = {53C50 (53C40)},
  MRNUMBER = {1873162},
       DOI = {10.1142/S0217751X01005821},
       URL = {https://doi.org/10.1142/S0217751X01005821},
}

@article {BS23,
    AUTHOR = {Beran, Tobias and S\"amann, Clemens},
     TITLE = {Hyperbolic angles in {L}orentzian length spaces and timelike
              curvature bounds},
   JOURNAL = {J. Lond. Math. Soc. (2)},
  FJOURNAL = {Journal of the London Mathematical Society. Second Series},
    VOLUME = {107},
      YEAR = {2023},
    NUMBER = {5},
     PAGES = {1823--1880},
      ISSN = {0024-6107,1469-7750},
   MRCLASS = {53B30 (28A75 51K10 53C23 53C50 53C80)},
  MRNUMBER = {4585303},
MRREVIEWER = {Benjam\'in\ Olea},
       DOI = {10.1112/jlms.12726},
       URL = {https://doi.org/10.1112/jlms.12726},
}

@misc{BR25+,
      title={Reshetnyak {M}ajorisation and discrete upper curvature bounds for {L}orentzian length spaces}, 
      author={Tobias Beran and Felix Rott},
      year={2025},
      eprint={2509.05224},
      archivePrefix={arXiv},
      primaryClass={math.DG},
      url={https://arxiv.org/abs/2509.05224}, 
    note={Preprint: \url{https://arxiv.org/abs/2509.05224}}
}

@article {EGCartanHadamard,
    AUTHOR = {Er\"os, Darius and Gieger, Sebastian},
     TITLE = {A synthetic {L}orentzian {C}artan-{H}adamard theorem},
   JOURNAL = {Trans. Amer. Math. Soc. Ser. B},
  FJOURNAL = {Transactions of the American Mathematical Society. Series B},
    VOLUME = {13},
      YEAR = {2026},
     PAGES = {132--155},
      ISSN = {2330-0000},
   MRCLASS = {53C50 (51K10 53B30 53C23)},
  MRNUMBER = {5062748},
       DOI = {10.1090/btran/248},
       URL = {https://doi.org/10.1090/btran/248},
}

@article {KS18,
    AUTHOR = {Kunzinger, Michael and S\"amann, Clemens},
     TITLE = {Lorentzian length spaces},
   JOURNAL = {Ann. Global Anal. Geom.},
  FJOURNAL = {Annals of Global Analysis and Geometry},
    VOLUME = {54},
      YEAR = {2018},
    NUMBER = {3},
     PAGES = {399--447},
      ISSN = {0232-704X,1572-9060},
   MRCLASS = {53C23 (53B30 53C50 53C80)},
  MRNUMBER = {3867652},
MRREVIEWER = {Benjam\'in\ Olea},
       DOI = {10.1007/s10455-018-9633-1},
       URL = {https://doi.org/10.1007/s10455-018-9633-1},
}

@article {AB08,
    AUTHOR = {Alexander, Stephanie B. and Bishop, Richard L.},
     TITLE = {Lorentz and semi-{R}iemannian spaces with {A}lexandrov
              curvature bounds},
   JOURNAL = {Comm. Anal. Geom.},
  FJOURNAL = {Communications in Analysis and Geometry},
    VOLUME = {16},
      YEAR = {2008},
    NUMBER = {2},
     PAGES = {251--282},
      ISSN = {1019-8385,1944-9992},
   MRCLASS = {53C50 (53B30 53C21)},
  MRNUMBER = {2425468},
MRREVIEWER = {J.\ Carlos\ D\'iaz-Ramos},
       DOI = {10.4310/cag.2008.v16.n2.a1},
       URL = {https://doi.org/10.4310/cag.2008.v16.n2.a1},
}

@article {BNR25,
    AUTHOR = {Beran, Tobias and Napper, Lewis and Rott, Felix},
     TITLE = {Alexandrov's patchwork and the {B}onnet-{M}yers theorem for
              {L}orentzian length spaces},
   JOURNAL = {Trans. Amer. Math. Soc.},
  FJOURNAL = {Transactions of the American Mathematical Society},
    VOLUME = {378},
      YEAR = {2025},
    NUMBER = {4},
     PAGES = {2713--2743},
      ISSN = {0002-9947,1088-6850},
   MRCLASS = {53C50 (51K10 53B30 53C23)},
  MRNUMBER = {4880460},
       DOI = {10.1090/tran/9372},
       URL = {https://doi.org/10.1090/tran/9372},
}

@misc{BBCGRR26+,
      title={A {S}plitting {T}heorem for non-positively curved {L}orentzian spaces}, 
      author={Barton, Joe and Beran, Tobias and Che, Mauricio and Gieger, Sebastian and R\"ohrig, Jona and Rott, Felix},
      year={2026},
      eprint={2601.14058},
      archivePrefix={arXiv},
      primaryClass={math.DG},
      url={https://arxiv.org/abs/2601.14058}, 
      note={Preprint: \url{https://arxiv.org/abs/2601.14058}}
}

@misc{GRZ26+,
      title={{PDE} aspects of the dynamical optimal transport in the {L}orentzian setting}, 
      author={Gigli, Nicola and Rott, Felix and Zanardini, Matteo},
      year={2026},
      eprint={2601.13167},
      archivePrefix={arXiv},
      primaryClass={math.AP},
      url={https://arxiv.org/abs/2601.13167}, 
      note={Preprint: \url{https://arxiv.org/abs/2601.13167}}
}

@article {CM16,
    AUTHOR = {Chudtong, Mantana and Maneesawarng, Chaiwat},
     TITLE = {An upper length estimate for curves in {${\rm CAT}(K)$}
              spaces},
   JOURNAL = {East-West J. Math.},
  FJOURNAL = {East-West Journal of Mathematics},
    VOLUME = {18},
      YEAR = {2016},
    NUMBER = {1},
     PAGES = {1--26},
      ISSN = {1513-489X},
   MRCLASS = {53C23 (51M16 53A35)},
  MRNUMBER = {3642567},
MRREVIEWER = {Barry\ Minemyer},
}

@article {AB98,
    AUTHOR = {Alexander, Stephanie B. and Bishop, Richard L.},
     TITLE = {The {F}ary-{M}ilnor theorem in {H}adamard manifolds},
   JOURNAL = {Proc. Amer. Math. Soc.},
  FJOURNAL = {Proceedings of the American Mathematical Society},
    VOLUME = {126},
      YEAR = {1998},
    NUMBER = {11},
     PAGES = {3427--3436},
      ISSN = {0002-9939,1088-6826},
   MRCLASS = {53C22 (53C40 57M25)},
  MRNUMBER = {1459103},
MRREVIEWER = {Wolfgang\ K\"uhnel},
       DOI = {10.1090/S0002-9939-98-04423-2},
       URL = {https://doi.org/10.1090/S0002-9939-98-04423-2},
}

@article {Res68,
    AUTHOR = {Reshetnyak, Yu.\ G.},
     TITLE = {Non-expansive maps in a space of curvature no greater than
              {$K$}},
   JOURNAL = {Sibirsk. Mat. \v Z.},
  FJOURNAL = {Akademija Nauk SSSR. Sibirskoe Otdelenie. Sibirski\u i\
              Matemati\v ceski\u i\ \v Zurnal},
    VOLUME = {9},
      YEAR = {1968},
     PAGES = {918--927},
      ISSN = {0037-4474},
   MRCLASS = {53.95},
  MRNUMBER = {244922},
MRREVIEWER = {L.\ W.\ Green},
}

@article {BCO06,
    AUTHOR = {Barros, Manuel and Caballero, Magdalena and Ortega, Miguel},
     TITLE = {Massless particles in warped three spaces},
   JOURNAL = {Internat. J. Modern Phys. A},
  FJOURNAL = {International Journal of Modern Physics A. Particles and
              Fields. Gravitation. Cosmology. Astrophysics. Accelerator
              Physics},
    VOLUME = {21},
      YEAR = {2006},
    NUMBER = {3},
     PAGES = {461--473},
      ISSN = {0217-751X,1793-656X},
   MRCLASS = {53C50 (53C80)},
  MRNUMBER = {2197347},
MRREVIEWER = {Angel\ Ferr\'andez},
       DOI = {10.1142/S0217751X06025559},
       URL = {https://doi.org/10.1142/S0217751X06025559},
}

@article {CFO07,
    AUTHOR = {Cabrerizo, Jose L. and Fernandez, Manuel and Ortega, Miguel},
     TITLE = {Massless particles in three-dimensional {L}orentzian warped
              products},
   JOURNAL = {J. Math. Phys.},
  FJOURNAL = {Journal of Mathematical Physics},
    VOLUME = {48},
      YEAR = {2007},
    NUMBER = {1},
     PAGES = {012901, 12},
      ISSN = {0022-2488,1089-7658},
   MRCLASS = {53C80 (53C50 58E30)},
  MRNUMBER = {2292611},
       DOI = {10.1063/1.2409522},
       URL = {https://doi.org/10.1063/1.2409522},
}

@article {Mil50,
    AUTHOR = {Milnor, J. W.},
     TITLE = {On the total curvature of knots},
   JOURNAL = {Ann. of Math. (2)},
  FJOURNAL = {Annals of Mathematics. Second Series},
    VOLUME = {52},
      YEAR = {1950},
     PAGES = {248--257},
      ISSN = {0003-486X},
   MRCLASS = {56.0X},
  MRNUMBER = {37509},
MRREVIEWER = {J.\ Nielsen},
       DOI = {10.2307/1969467},
       URL = {https://doi.org/10.2307/1969467},
}

@article {Gra89,
    AUTHOR = {Grayson, Matthew A.},
     TITLE = {The shape of a figure-eight under the curve shortening flow},
   JOURNAL = {Invent. Math.},
  FJOURNAL = {Inventiones Mathematicae},
    VOLUME = {96},
      YEAR = {1989},
    NUMBER = {1},
     PAGES = {177--180},
      ISSN = {0020-9910,1432-1297},
   MRCLASS = {53A04 (58F25)},
  MRNUMBER = {981740},
MRREVIEWER = {Joel\ L.\ Weiner},
       DOI = {10.1007/BF01393973},
       URL = {https://doi.org/10.1007/BF01393973},
}

@article {CS24,
    AUTHOR = {Coiculescu, Matei P. and Schwartz, Richard Evan},
     TITLE = {The affine shape of a figure-eight under the curve shortening
              flow},
   JOURNAL = {J. Differential Geom.},
  FJOURNAL = {Journal of Differential Geometry},
    VOLUME = {127},
      YEAR = {2024},
    NUMBER = {3},
     PAGES = {945--968},
      ISSN = {0022-040X,1945-743X},
   MRCLASS = {53E99 (53A04)},
  MRNUMBER = {4773171},
MRREVIEWER = {Anders\ Linn\'er},
       DOI = {10.4310/jdg/1721071494},
       URL = {https://doi.org/10.4310/jdg/1721071494},
}

@article {Jee84,
    AUTHOR = {Jee, Dong Jin},
     TITLE = {Gauss--{B}onnet formula for general {L}orentzian surfaces},
   JOURNAL = {Geom. Dedicata},
  FJOURNAL = {Geometriae Dedicata},
    VOLUME = {15},
      YEAR = {1984},
    NUMBER = {2},
     PAGES = {215--231},
      ISSN = {0046-5755,1572-9168},
       DOI = {10.1007/BF00147645},
       URL = {https://doi.org/10.1007/BF00147645},
}
\bibliographystyle{abbrv}

\end{document}